\newcommand{\numberset}{\mathbb}
\newcommand{\N}{\numberset{N}}
\newcommand{\Z}{\numberset{Z}}
\def\NN{{\mathbb N}}
\def\ZZ{{\mathbb Z}}
\newcommand{\cC}{\mathcal{C}}
\newcommand{\cP}{\mathcal{P}}
\newcommand{\cH}{\mathcal{H}}
\newcommand{\cI}{\mathcal{I}}
\newcommand{\cB}{\mathcal{B}}
\newcommand{\cF}{\mathcal{F}}
\newcommand{\cW}{\mathcal{W}}
\newcommand{\cL}{\mathcal{L}}
\theoremstyle{plain}
\theoremstyle{theorem}
\newtheorem{defn}{Definition}[section]
\newtheorem{prop}[defn]{Proposition}
\newtheorem{thm}[defn]{Theorem}
\newtheorem{conj}[defn]{Conjecture}
\newtheorem{rmk}[defn]{Remark}
\theoremstyle{remark}
\begin{document}
		
		\title[PRIMALITY OF WEAKLY CONNECTED COLLECTIONS OF CELLS...]{PRIMALITY OF WEAKLY CONNECTED COLLECTIONS OF CELLS AND WEAKLY CLOSED PATH POLYOMINOES}
		
		\author{CARMELO CISTO}
		\address{Universit\'{a} di Messina, Dipartimento di Scienze Matematiche e Informatiche, Scienze Fisiche e Scienze della Terra\\
			Viale Ferdinando Stagno D'Alcontres 31\\
			98166 Messina, Italy}
		\email{carmelo.cisto@unime.it}

		\author{FRANCESCO NAVARRA}
		\address{Universit\'{a} di Messina, Dipartimento di Scienze Matematiche e Informatiche, Scienze Fisiche e Scienze della Terra\\
			Viale Ferdinando Stagno D'Alcontres 31\\
			98166 Messina, Italy}
		\email{francesco.navarra@unime.it}

	\author{ROSANNA UTANO}
	\address{Universit\'{a} di Messina, Dipartimento di Scienze Matematiche e Informatiche, Scienze Fisiche e Scienze della Terra\\
	Viale Ferdinando Stagno D'Alcontres 31\\
	98166 Messina, Italy}
	\email{rosanna.utano@unime.it}

      \keywords{Polyominoes, toric ideals, zig-zag walks.}
		
		\subjclass[2020]{05B50, 05E40, 13C05, 13G05}

		\begin{abstract}
	In this paper we study the primality of weakly connected collections of cells, showing that the ideal generated by inner 2-minors attached to a weakly connected and simple collection of cells is the toric ideal of the edge ring of a weakly chordal bipartite graph. As an application of this result we characterize the primality of the polyomino ideals of weakly closed paths, a new class of non simple polyominoes.
%	 In this paper we study the primality of a new class of polyominoes, called \textit{weakly closed
%	 paths}. Firstly, we prove that the inner 2-minors ideal associated to a weakly connected and simple
%	 collection of cells is the toric ideal of a weakly chordal bipartite graph. We classify all weakly closed paths which do not contain zig-zag walks, we give suitable toric
%	 representations of the associated polyomino ideal and finally we prove that having no zig-zag walks
%	 is a necessary and sufficient condition for the primality of the polyomino ideal attached to a weakly
%	 closed path, supporting an existing conjecture.
		\end{abstract}

		\maketitle
		
	\section{Introduction}
	
	\noindent The study of the ideals of $t$-minors of an $m \times n$ matrix of indeterminates is a central topic in Commutative Algebra. The determinantal ideals are studied in \cite{conca1}, \cite{conca2} and \cite{conca3}, the ideals of adjacent 2-minors are studied in \cite{adiajent1},\cite{adiajent3} and \cite{adjent 2} as well as the ideals generated by an arbitrary set
	of $2$-minors in a $2\times n$ matrix in \cite{2.n}. The class of polyomino ideals generalizes the class of the ideals generated by 2-minors of $m\times n$ matrices of indeterminates. Roughly speaking, polyominoes are finite collections of squares of the same size joined edge to edge. In \cite{Qureshi} A.A. Qureshi establishes a connection of polyominoes to Commutative Algebra, attaching to a polyomino $\cP$ the ideal generated by all inner 2-minors of $\cP$. This ideal is called the \textit{polyomino ideal} of $\cP$ and it is denoted by $I_{\cP}$. We say that a polyomino $\cP$ is prime if the polyomino ideal $I_{\cP}$ is prime. One of the most exciting challenges is to characterize the primality of $I_{\cP}$ depending upon the shape of $\cP$. In \cite{Simple equivalent balanced} and in \cite{Simple are prime} it is proved that the simple polyominoes, which are roughly speaking the polyominoes without holes, are prime. %In the first paper, the authors used the statements that a polyomino ideal associated to a balanced polyomino is prime and that balanced polyominoes' class is equal to simple polyominoes' one; in the second paper the authors has identified the quotient ring of a simple polyomino with the toric ring of a weakly chordal graph.\\ 
	Nowadays the study is applied to multiply connected polyominoes, which are polyominoes with one or more holes. In \cite{Not simple with localization} and \cite{Shikama}, the authors prove that the polyominoes obtained by removing a convex polyomino from a rectangle in $\NN^2$ are prime. In \cite{Trento} the authors introduce a very useful tool to provide a characterization of prime polyominoes. They  define a particular sequence of inner intervals of $\cP$, called a \textit{zig-zag walk}, and they prove that if $I_{\cP}$ is prime then $\cP$ does not contain zig-zag walks. Using a computational method, they show that for polyominoes consisting of at most fourteen cells the non-existence of zig-zag walks in $\cP$ is a sufficient condition in order for $I_{\cP}$ to be prime. Therefore, they conjecture that non-existence of zig-zag walks in a polyomino characterizes its primality. In order to support this conjecture, in \cite{Cisto_Navarra} the authors introduce a new class of polyominoes, called closed paths, and they prove that having no zig-zag walks is a necessary and sufficient condition for their primality. Moreover they describe some classes of prime polyominoes, which can be considered as a generalization of closed paths. In \cite{Trento2} the authors give some conditions so that the set of generators of $I_{\cP}$ forms a reduced Gr\"obner basis with respect to some suitable degree reverse lexicographic monomial orders and they have proved that in these cases the polyomino is prime. Using this method, they prove the primality of two classes of thin polyominoes, which are polyominoes not containing the square tetromino. In addition, in  \cite{Trento3} the Hilbert series of simple thin polyominoes is studied.\\ %: the first one is a subclass of closed path and consists of thin cycles with inner intervals of length at least 3, the second one is made up of subgrid polyominoes, which are polyominoes obtained from grid polyominoes removing by them some specific cells. \\
	In this paper we study the primality of the weakly connected collections of cells and of a new class of non-simple polyominoes, called \textit{weakly closed paths}. In Section \ref{Section: Introduction} we introduce the preliminary notions and some useful tools. Inspired by \cite{Simple are prime}, in Section \ref{Section: Bipartite graph and edge ring} we define a bipartite graph $G(\cP)$ attached to a weakly connected and simple collection $\cP$ of cells of $\ZZ^2$, we prove that $G(\cP)$ is weakly chordal and that the ideal generated by inner 2-minors of $\cP$ is the toric ideal attached to the edge ring of $G(\cP)$. This result generalizes Theorem 3.10 of \cite{Simple are prime} and we conjecture that absence of zig-zag walks in a weakly connected collection of cells characterizes its primality. Applying the previous result and using the same demonstrative approach introduced in \cite{Cisto_Navarra}, in Section \ref{Section: Weakly closed paths} we obtain that the new class of multiply connected polyominoes, called \textit{weakly closed paths}, satisfies the conjecture, which states that a polyomino ideal is prime if and only if the polyomino does not contain zig-zag walks.

	\section{Preliminaries on polyominoes and polyomino ideals}\label{Section: Introduction}
	 
\noindent Let $(i,j),(k,l)\in \Z^2$, we define the natural partial order on $\Z^2$ as $(i,j)\leq(k,l)$ if $i\leq k$ and $j\leq l$. Let $a=(i,j),b=(k,l)\in\Z^2$ with $a\leq b$. The set $[a,b]=\{(r,s)\in \Z^2: i\leq r\leq k,\ j\leq s\leq l \}$ is called an \textit{interval} of $\Z^2$. %We define the \textit{closure} of $[a,b]$ the set $\overline{[a,b]}=\{x\in \R^2:a\leq x\leq b \}$. 
If $i< k$ and $j< l$, then $[a,b]$ is a \textit{proper} interval. The elements $a, b$ are the \textit{diagonal corners} and $c=(i,l)$, $d=(k,j)$ the \textit{anti-diagonal corners} of $[a,b]$. If $j=l$ (or $i=k$) we say that $a$ and $b$ are in \textit{horizontal} (or \textit{vertical}) \textit{position}. An interval as $C=[a,a+(1,1)]$ is called a \textit{cell} of $\ZZ^2$ and $a$ is the \textit{lower left corner} of $C$. The elements $a$, $a+(0,1)$, $a+(1,0)$ and $a+(1,1)$ are called the \textit{vertices} or \textit{corners} of $C$ and the sets $\{a,a+(1,0)\}$, $\{a+(1,0),a+(1,1)\}$, $\{a+(0,1),a+(1,1)\}$ and $\{a,a+(0,1)\}$ are called the \textit{edges} of $C$. We denote the set of the vertices and the edges of $C$ respectively by $V(C)$ and $E(C)$. If $C$ is a cell of $\Z^2$, then we say that a cell $D$ of $\Z^2$ is adjacent to $C$ if $C\cap D$ is a common edge of $C$ and $D$.\\ %Let $A$ and $B$ be two cells of $\Z^2$ and let $a=(i,j)$ and $b=(k,l)$ be the lower left corners of $A$ and $B$, with $a\leq b$. The \textit{cell interval}, denoted by $[A,B]$, is defined by the set of the cells of $\Z^2$ with lower left corner $(r,s)$ such that $i\leqslant r\leqslant k$ and $j\leqslant s\leqslant l$. If $(i,j)$ and $(k,l)$ are in horizontal position, we say that the cells $A$ and $B$ are in horizontal position. Similarly, we define two cells in vertical position. \\
Let $\cP$ be a non-empty collection of cells in $\Z^2$. The set of the vertices and the edges of $\cP$ are respectively $V(\cP)=\bigcup_{C\in \cP}V(C)$ and $E(\cP)=\bigcup_{C\in \cP}E(C)$. Let $C$ and $D$ be two distinct cells of $\cP$. A \textit{walk} from $C$ to $D$ is a sequence $\cC:C=C_1,\dots,C_m=D$ of cells of $\ZZ^2$ such that $C_i \cap C_{i+1}$ is an edge of $C_i$ and $C_{i+1}$ for $i=1,\dots,m-1$. If in addition $C_i \neq C_j$ for all $i\neq j$, then $\cC$ is called a \textit{path} from $C$ to $D$. We say that $C$ and $D$ are connected if there exists a path of cells in $\cP$ from $C$ to $D$. 
%Let $\cC_1:A_1,\dots,A_m$ and $\cC_2:B_1,\dots,B_n$ be two walks such that $A_m=B_1$. We define the union of $\cC_1$ and $\cC_2$, denoted by $\cC_1\cup \cC_2$, as the walk $A_1,\dots,A_{m-1},A_m,B_2,\dots,B_n$. 
%Let $\cC:C_1,\dots,C_m$ be a walk from $C$ to $D$ and $(a_i,b_i)$ be the lower left corner of $C_i$ for all $i=1,\dots,m$. We say that $\cC$ changes direction towards:
%	\begin{itemize}
%		\item North, if $(a_{i+1}-a_i,b_{i+1}-b_i)=(0,1)$ for some $i=1,\dots,m-1$;
%		\item South, if $(a_{i+1}-a_i,b_{i+1}-b_i)=(0,-1)$ for some $i=1,\dots,m-1$;
%		\item East, if $(a_{i+1}-a_i,b_{i+1}-b_i)=(1,0)$ for some $i=1,\dots,m-1$;
%		\item West, if $(a_{i+1}-a_i,b_{i+1}-b_i)=(-1,0)$ for some $i=1,\dots,m-1$.
%	\end{itemize} 
%We say that $\cC$ contains a path $\cF$ if there exist $i,j\in\{1,\dots,m\}$ with $i<j$ such that $\cF$ is a path from $C_i$ to $C_j$ and every cell of $\cF$ is a cell of $\cC$. It is obvious that if all cells of a walk are distinct then it is a path; anyway, it is easy to prove that each walk $\cC$ from $C$ to $D$ contains a path between $C$ and $D$.\\ 
Let $\cP$ be a non-empty, finite collection of cells in $\Z^2$. We say that $\cP$ is a \textit{polyomino} if any two cells of $\cP$ are connected. For instance, see Figure \ref{Figura: A polyomino+weakly connected} (A). We say that $\cP$ is \textit{weakly connected} if for any two cells $C$ and $D$ in $\cP$ there exists a sequence of cells $\cC: C=C_1,\dots,C_m=D$ of $\cP$ such that $V(C_i)\cap V(C_{i+1}) \neq \emptyset$  for all $i=1,\dots,m-1$. Let $\cP'$ be a subset of cells of $\cP$. $\cP'$ is called a \textit{connected component} of $\cP$ if $\cP'$ is a polyomino and it is maximal with respect to the set inclusion, that is if $A\in \cP\setminus \cP'$ then $\cP'\cup \{A\}$ is not a polyomino.	For instance, see Figure \ref{Figura: A polyomino+weakly connected} (B).
\begin{figure}[h!]
	\centering
	\subfloat[]{\includegraphics[scale=0.5]{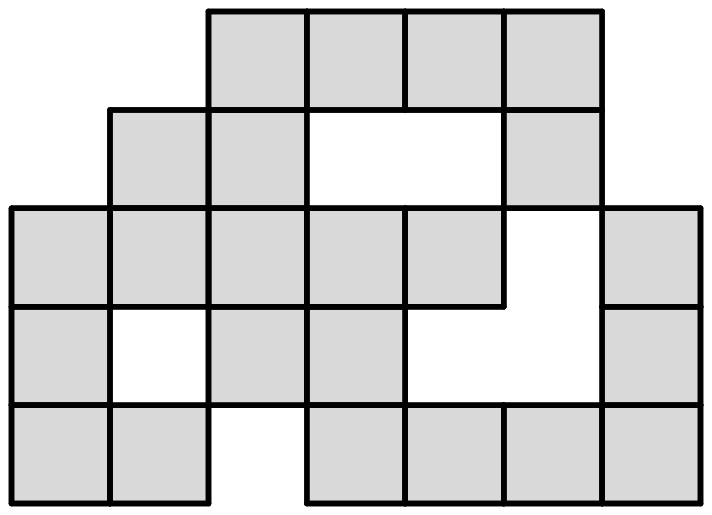}}\qquad\qquad\qquad
	\subfloat[]{\includegraphics[scale=0.5]{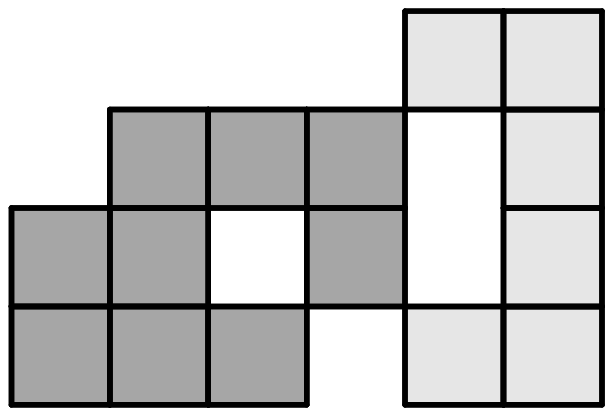}}
	\caption{}
	\label{Figura: A polyomino+weakly connected}
\end{figure}\\
%	\begin{figure}[h]
%		\centering
%		\includegraphics[scale=0.5]{Esempio_collezione_di_celle_deb_connesse}
%		\caption{A weakly connected collection of cells.}
%		\label{A weakly connected collection of cells.}
%	\end{figure}\\
	 We say that $\cP$ is \textit{simple} if for any two cells $C$ and $D$ of $\Z^2$, not in $\cP$, there exists a path $\cC: C=C_1,\dots,C_m=D$ such that $C_i\notin \cP$  for all $i=1,\dots,m$. For example, the polyomino and the weakly connected collection of cells in Figure \ref{Figura: A polyomino+weakly connected} are not simple. A finite collection of cells $\cH$ not in $\cP$ is a \textit{hole} of $\cP$ if any two cells $F$ and $G$ of $\cH$ are connected by a path $\cF:F=F_1,\dots,F_t=G$ such that $F_j\in \cH$ for all $j=1,\dots,t$ and $\cH$ is maximal with respect to set inclusion. Observe that each hole of a collection $\cP$ of cells is a simple polyomino and $\cP$ is simple if and only if it has not any hole.\\
	 Let $A$ and $B$ be two cells of $\Z^2$ and let $a=(i,j)$ and $b=(k,l)$ be the lower left corners of $A$ and $B$, with $a\leq b$. The \textit{cell interval}, denoted by $[A,B]$, is defined by the set of the cells of $\Z^2$ with lower left corner $(r,s)$ such that $i\leqslant r\leqslant k$ and $j\leqslant s\leqslant l$. If $(i,j)$ and $(k,l)$ are in horizontal position, we say that the cells $A$ and $B$ are in horizontal position. Similarly, we define two cells in vertical position. Let $A$ and $B$ be two cells of $\cP$ in vertical or horizontal position. The cell interval $[A,B]$ is called a
	 \textit{block of $\cP$ of length n} if any cell $C$ of $[A,B]$ belongs to $\cP$ and $|[A,B]|=n$. The cells $A$ and $B$ are called \textit{extremal} cells of $[A,B]$. The block $[A,B]$ is \textit{maximal} if there does not exist any block $[A',B']$ of $\cP$ such that $[A,B]\subset [A',B']$. Moreover if $A$ and $B$ are in vertical (resp. horizontal) position, then $[A,B]$ is also called a \textit{maximal vertical (resp. horizontal) block} of $\cP$. %We say that $\cP$ is \textit{row convex} if for any two cells $A$ and $B$ of $\cP$ in horizontal position the cell interval $[A,B]$ is a block of $\cP$. Similarly one defines \textit{column convex}. A collection of cells $\cP$ is called \textit{convex} if it is row and column convex.
	An interval $[a,b]$ with $a=(i,j)$, $b=(k,j)$ and $i<k$ is called a \textit{horizontal edge interval} of $\cP$ if the sets $\{(\ell,j),(\ell+1,j)\}$ are edges of cells of $\cP$ for all $\ell=i,\dots,k-1$. In addition, if $\{(i-1,j),(i,j)\}$ and $\{(k,j),(k+1,j)\}$ do not belong to $E(\cP)$, then $[a,b]$ is called a \textit{maximal} horizontal edge interval of $\cP$. We define similarly a \textit{vertical edge interval} and a \textit{maximal} vertical edge interval. Observe that a lattice interval of $\ZZ^2$ identifies a cell interval of $\ZZ^2$ and vice versa, so if $I$ is an interval of $\ZZ^2$ we denote by $\cP_I$ the cell interval associated to $I$.
	% We say that an edge of a cell of $\cP$ is a \textit{border edge} if it is not an edge of any other cell of $\cP$. A \textit{border edge of $\cP$} is defined to be an edge interval of $\cP$ consisting of border edges of cells of $\cP$. The union of the closures of the border edges of $\cP$ is called \textit{perimeter} of $\cP$. 
	A proper interval $[a,b]$ is called an \textit{inner interval} of $\cP$ if all cells of $[a,b]$ belong to $\cP$.\\
	Following \cite{Trento} we recall the definition of a \textit{zig-zag walk} of $\cP$. A zig-zag walk of $\cP$ is a sequence $\cW:I_1,\dots,I_\ell$ of distinct inner intervals of $\cP$ where, for all $i=1,\dots,\ell$, the interval $I_i$ has either diagonal corners $v_i$, $z_i$ and anti-diagonal corners $u_i$, $v_{i+1}$ or anti-diagonal corners $v_i$, $z_i$ and diagonal corners $u_i$, $v_{i+1}$, such that:
	\begin{enumerate}
		\item $I_1\cap I_\ell=\{v_1=v_{\ell+1}\}$ and $I_i\cap I_{i+1}=\{v_{i+1}\}$, for all $i=1,\dots,\ell-1$;
		\item $v_i$ and $v_{i+1}$ are on the same edge interval of $\cP$, for all $i=1,\dots,\ell$;
		\item for all $i,j\in \{1,\dots,\ell\}$ with $i\neq j$, there exists no inner interval $J$ of $\cP$ such that $z_i$, $z_j$ belong to $V(J)$.
	\end{enumerate}
%	\begin{figure}[h]
%	\centering
%		\includegraphics[scale=0.65]{Esempio_zigzag.eps}
%		\caption{An example of a zig-zag walk of $\cP$.}
%	\end{figure}
	Let $\cP$ be a non-empty finite collection of
	cells in $\Z^2$. Let $K$ be a field and $S=K[x_v\mid v\in V(\cP)]$. Consider a proper interval $[a,b]$ of $\ZZ^2$, with $a$,$b$ diagonal corners and $c$,$d$ anti-diagonal ones. We attach the binomial $x_ax_b-x_cx_d$ to $[a,b]$ and if $[a,b]$ is an inner interval then the binomial $x_ax_b-x_cx_d$ is called an \textit{inner 2-minor} of $\cP$. We denote by $I_{\cP}\subset S$ the ideal in $S$ generated by all the inner 2-minors of $\cP$. We set also $K[\cP] = S/I_{\cP}$, that is the coordinate ring of $\cP$. If $\cP$ is a polyomino, the ideal $I_{\cP}$ is called the \textit{polyomino ideal of $\cP$}.\\

	\section{Bipartite graph and edge ring associated to a simple collection of cells}\label{Section: Bipartite graph and edge ring}
\noindent Let $\cP$ be a weakly connected collection of cells of $\ZZ^2$. Let $\{V_i\}_{i\in I}$ be the sets of the maximal vertical edge intervals of $\cP$ and $\{H_j\}_{j\in J}$ be the set of the maximal horizontal edge intervals of $\cP$. Let $\{v_i\}_{i\in I}$ and $\{h_j\}_{j\in J}$ be two sets of variables associated respectively to $\{V_i\}_{i\in I}$ and $\{H_j\}_{j\in J}$. We associate to $\cP$ a bipartite graph $G(\cP)$, whose vertex set is $V(G(\cP))=\{v_i\}_{i\in I}\sqcup \{h_j\}_{j\in J}$ and edge set is $E(G(\cP))=\big \{ \{v_i,h_j\} | V_i\cap H_j\in V(\cP)\big\}$. For instance, Figure \ref{Figura_colezione_di_celle_con_grafo} illustrates a collection of cells $\cP$ on the left and its associated bipartite graph $G(\cP)$ on the right.
	
		\begin{figure}[h]
		\centering
		\subfloat{\includegraphics[scale=0.8]{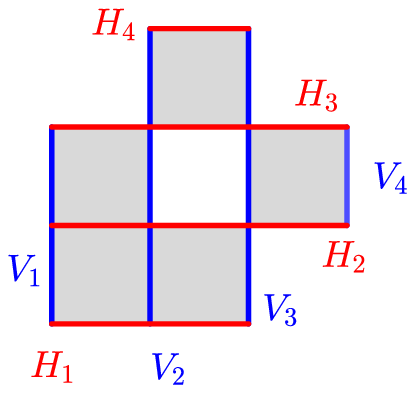}}\qquad\qquad
		\subfloat{\includegraphics[scale=0.7]{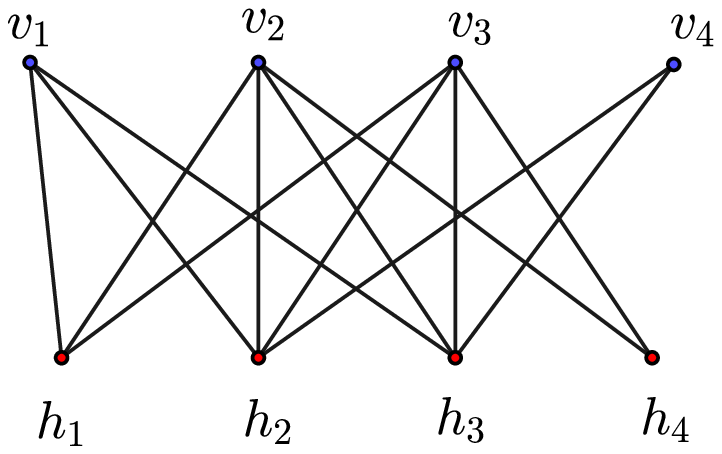}}
		\caption{}
		\label{Figura_colezione_di_celle_con_grafo}
	\end{figure}

\noindent In the bipartite graph $G(\cP)$ a cycle $\cC_{G(\cP)}$ of length $2r$ is a subset $\{v_{i_1},h_{j_1},\dots,v_{i_{r-1}},h_{j_{r-1}},v_{i_r},h_{j_r}\}$ of distinct vertices of $V(G(\cP))$ such that $\{v_{i_k},h_{j_k}\}$ and $\{h_{j_k},v_{i_{k+1}}\}$ belong to $E(G(\cP))$ for all $k=1,\dots,r$, where $i_{r+1}=i_1$. Since $\{v_{i_k},h_{j_k}\}\in E(G(\cP))$, $V_{i_k}\cap H_{j_k}$ is a vertex of $\cP$ for all $k=1,\dots,r$; similarly, since $\{h_{j_k},v_{i_{k+1}}\}\in E(G(\cP))$, $V_{i_{k+1}}\cap H_{j_k}$ is a vertex of $\cP$ for all $k=1,\dots,r$, where $i_{r+1}=i_1$. We can associate to each cycle $\cC_{G(\cP)}$ in $G(\cP)$ the following binomial:
\[
		f_{\cC_{G(\cP)}}=x_{V_{i_1}\cap H_{j_1}}\dots x_{V_{i_r}\cap H_{j_r}}-x_{V_{i_2}\cap H_{j_1}}\dots x_{V_{i_1}\cap H_{j_r}}
\]
  Following \cite{def balanced}, we recall the definition of a cycle in $\cP$. A cycle $\cC_{\cP}$ in $\cP$ is a sequence $a_1,\dots,a_m$ of vertices of $\cP$ such that:
\begin{enumerate}
	\item $a_1=a_m$;
	\item $a_i\neq a_j$ for all $i\neq j$ with $i,j\in\{1,\dots,m-1\}$;
	\item $[a_i,a_{i+1}]$ is a horizontal or vertical edge interval of $\cP$ for all $i=1,\dots,m-1$;
	\item for all $i=1,\dots,m$, if $[a_i,a_{i+1}]$ is a horizontal edge interval of $\cP$, then $[a_{i+1},a_{i+2}]$ is a vertical edge interval of $\cP$ and vice versa, with $a_{m+1}=a_2$.
\end{enumerate} 
The vertices $a_1\dots,a_{m-1}$ of $\cP$ are called \textit{vertices of $\cC_{\cP}$} and we set $V(\cC_{\cP})=\{a_1,\dots,a_{m-1}\}$. It follows from the definition of a cycle that $m$ is odd, so we can consider the following binomial
\[
	f_{\cC_{\cP}}=\prod_{k=1}^{(m-1)/2}x_{a_{2k-1}}-\prod_{k=1}^{(m-1)/2}x_{a_{2k}}
\]
and we can attach to each cycle $\cC_{\cP}$ in $\cP$  the binomial $f_{\cC_{\cP}}$. Moreover, a cycle in $\cP$ is called \textit{primitive} if each maximal edge interval of $\cP$ contains at most two vertices of $\cC_{\cP}$.

\begin{rmk}\rm \label{Remark_Ad un ciclo in G(P) corrisponde ciclo in P}

Arguing as in Section 1 of \cite{Simple are prime}, a cycle $\cC_{G(\cP)}=\{v_{i_1},h_{j_1},v_{i_2},h_{j_2}\dots,v_{i_{r-1}},h_{j_{r-1}},v_{i_r},h_{j_r}\}$ of the bipartite graph $G(\cP)$ associated to $\cP$ defines a primitive cycle $\cC_{\cP}: V_{i_1}\cap H_{j_1}, V_{i_2}\cap H_{j_1}, V_{i_2}\cap H_{j_2},\dots, V_{i_r}\cap H_{j_r}, V_{i_1}\cap H_{j_r},  V_{i_1}\cap H_{j_1}$ in $\cP$  and vice versa. Moreover, we have also $f_{\cC_{G(\cP)}}=f_{\cC_{\cP}}$. %by the following sequence of vertices of $\cP$:
% \[
% 	\cC_{\cP}: V_{i_1}\cap H_{j_1}, V_{i_2}\cap H_{j_1}, V_{i_2}\cap H_{j_2},\dots, V_{i_r}\cap H_{j_r}, V_{i_1}\cap H_{j_r},  V_{i_1}\cap H_{j_1}
% \]
% It is clear that $\cC_{\cP}$ is a primitive cycle of $\cP$. %We show that it is also primitive. Suppose by contradiction it is not primitive. Then there exist at least three distinct vertices of $\cC_{\cP}$ that are on the same maximal edge interval. We may assume that are on the same maximal horizontal edge interval of $\cP$. Then $V_{i_{k_1}}\cap H_{j_k},V_{i_{k_2}}\cap H_{j_k},V_{i_{k_3}}\cap H_{j_k}$ belong to $V(\cC_{\cP})$ for some $k_1,k_2,k_3,k\in\{1,\dots,r\}$ and $k_1<k_2<k_3$. 
% Vice versa, consider a primitive cycle $\cC_{\cP}:a_1,\dots,a_m$ in $\cP$. We may assume that $[a_1,a_2]$ is a horizontal edge interval of $\cP$, $[a_2,a_3]$ is a vertical edge interval of $\cP$ and so on up to $[a_{m-1},a_1]$ that is a vertical edge interval of $\cP$. Let $\{V_{i_k}\}_{k=1,\dots.(m-1)/2}$ and  $\{H_{j_k}\}_{k=1,\dots,(m-1)/2}$ be respectively the distinct vertical and horizontal maximal edge intervals of $\cP$ such that $[a_{2(k-1)},a_{2k-1}]\subseteq V_{i_k}$ and $[a_{2k-1},a_{2k}] \subseteq H_{j_k}$ for all $k=1,\dots,\frac{m-1}{2}$, where $a_0=a_{m-1}$. Then $a_1=V_{i_1}\cap H_{j_1}$, $a_2=V_{i_2}\cap H_{j_1},\dots, a_{m-1}= V_{i_1}\cap H_{j_{(m-1)/2}}$. Therefore 
% \[\{v_{i_1},h_{j_1},v_{i_2},h_{j_2}\dots,v_{i_{(m-1)/2}},h_{j_{(m-1)/2}}\}\]
% is a cycle in $G(\cP)$. Moreover, we have also $f_{\cC_{G(\cP)}}=f_{\cC_{\cP}}$.
\end{rmk}
\noindent Recall that a graph is called \textit{weakly chordal} if every cycle of length greater than 4 has a chord. According to \cite{Simple are prime}, if $\cC_{\cP}:a_1,\dots, a_m$ is a cycle in $\cP$ then $\cC_{\cP}$ has a \textit{self-crossing} if there exist two indices $i,j\in \{1,\dots,m-1\}$ such that:
\begin{enumerate}
\item $a_i,a_{i+1}\in V_k$ and $a_j,a_{j+1}\in H_l$ for some $k\in I$ and $l\in J$;
\item $a_i,a_{i+1},a_{j},a_{j+1}$ are all distinct;
\item $V_k\cap H_l\neq \emptyset$.
\end{enumerate} 
In such a case, as in Section 2 of \cite{Simple are prime}, if $\cC_{\cP}$ is a primitive cycle in $\cP$ having a self-crossing, then $\cC_{G(\cP)}$ has a chord.

\noindent %In \cite[Theorem 3.10]{Qureshi}  A.A. Qureshi proved that if $\cP$ is a simple collection of cells such that each connected component of $\cP$ is row or columnconvex, then $I_{\cP}$ is a prime ideal while 
Moreover in \cite{Simple are prime} the authors show that the polyomino ideal attached to a simple polyomino is the toric ideal of the edge ring of the weakly chordal graph $G(\cP)$. Now, we give a generalization of these results, which will be useful and crucial in the last section.
\begin{prop}\label{Prop: Se P è debolmente connessa e semplice, allora G(P) è debolm cordale}
	Let $\cP$ be a weakly connected and simple collection of cells. Then $G(\cP)$ is weakly chordal.
\end{prop}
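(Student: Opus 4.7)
The plan is to adapt the proof of Theorem~3.10 of \cite{Simple are prime}, which handles the case of a simple polyomino, to the more general setting of a weakly connected simple collection of cells. Since $G(\cP)$ is bipartite, every cycle has even length, so under the definition of weakly chordal recalled just above the proposition it suffices to show that every cycle of $G(\cP)$ of length $2r$ with $r\geq 3$ admits a chord.

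First I would invoke the correspondence recorded in Remark~3.1: such a cycle $\cC_{G(\cP)}$ is produced by a primitive cycle $\cC_{\cP}: a_1,\dots,a_m$ in $\cP$ with $m-1=2r$. Combined with the observation in the paragraph preceding the statement (namely, that a self-crossing of a primitive cycle $\cC_{\cP}$ yields a chord of $\cC_{G(\cP)}$), this reduces the proposition to the purely geometric claim: every primitive cycle in $\cP$ with at least six vertices has a self-crossing.

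To prove this claim I would view $\cC_{\cP}$ as a simple closed polygonal curve in $\RR^2$ consisting of alternating horizontal and vertical segments, and consider the bounded region $R$ it encloses by the Jordan curve theorem. The essential input is the simplicity of $\cP$: every cell of $\ZZ^2$ contained in $R$ must belong to $\cP$, since otherwise such cells (being separated from the unbounded component of the complement by $\cC_{\cP}$) would form a hole of $\cP$, contradicting the hypothesis. Weak connectedness, rather than ordinary edge-connectedness, is enough for this step because the primitive cycle together with its interior automatically lies in a single weakly connected component of $\cP$. With $R\subset\cP$ established, the combinatorial analysis from Section~3 of \cite{Simple are prime} carries over: for $m-1\geq 6$, one locates two edges of $\cC_{\cP}$, one horizontal lying in a maximal horizontal edge interval $H_l$ and one vertical lying in a maximal vertical edge interval $V_k$, whose four endpoints are all distinct and whose containing intervals satisfy $V_k\cap H_l\in R$. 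Since every vertex in $R$ is a vertex of $\cP$, the three defining conditions of a self-crossing are all met.

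The main obstacle I anticipate is verifying that the combinatorial step used to locate the pair $(V_k,H_l)$ from \cite{Simple are prime} is robust under passage from polyominoes to weakly connected collections. The concern is that in the weakly connected setting $\cP$ may have ``pinch points'' where two cells meet only at a vertex, which could in principle disturb arguments relying on edge-adjacency. However, since the argument only requires that the interior $R$ of a primitive cycle be filled by cells of $\cP$ (a fact guaranteed by simplicity), and since along $\cC_{\cP}$ the relevant maximal edge intervals are computed in $\cP$ itself, the local argument determining the self-crossing pair is insensitive to this distinction and transfers without change.
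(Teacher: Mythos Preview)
Your proposal is plausible, but it takes a genuinely different route from the paper, and it leans on an assertion that you do not actually verify.

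The paper's proof does \emph{not} try to rerun the self-crossing argument of \cite{Simple are prime} in the weakly connected setting. Instead it decomposes $\cP$ into its connected (edge-connected) components $\cP_1,\dots,\cP_m$, which are simple polyominoes meeting pairwise in at most one vertex, and splits into two cases. If all vertices of the primitive cycle $\cC_{\cP}$ lie in a single component $\cP_j$, the paper simply invokes Lemma~2.1 of \cite{Simple are prime} for the simple polyomino $\cP_j$ as a black box to get a chord. If the cycle has vertices in two different components meeting at the pinch point $\tilde v$, then by primitivity the cycle must use both the maximal horizontal edge interval $H_{\tilde v}$ and the maximal vertical edge interval $V_{\tilde v}$ through $\tilde v$ (once each), and since $V_{\tilde v}\cap H_{\tilde v}=\{\tilde v\}$ this is already a self-crossing; no Jordan-curve or interior-filling argument is needed at all in this case.

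By contrast, you attempt a uniform argument: fill the interior $R$ of the cycle by simplicity and then claim the self-crossing construction from \cite{Simple are prime} ``transfers without change.'' The interior-filling step is indeed correct, but your final paragraph only asserts that the combinatorial location of the self-crossing pair $(V_k,H_l)$ depends solely on the property that $R\subset\cP$. That is precisely the point that would need to be checked carefully, since the maximal edge intervals of $\cP$ can differ from those of the sub-polyomino $R$ near a pinch vertex, and you have not examined the argument of \cite{Simple are prime} to confirm it is insensitive to this. The paper's approach buys you exactly this: by reducing to a single connected component (a genuine simple polyomino) or to an explicit self-crossing at $\tilde v$, it avoids having to reopen the proof in \cite{Simple are prime} altogether. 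Your approach, if completed, would be more uniform and arguably more conceptual, but as written the key transfer step is asserted rather than proved.
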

\begin{proof}
	We may assume that $\cP$ has two connected components, denoted by $\cP_1$ and $\cP_2$. The arguments are similar if $\cP$ has more than two connected components. Let $V(\cP_1)\cap V(\cP_2)=\{\tilde{v}\}$. Let $\cC_{G(\cP)}=\{v_{i_1},h_{j_1},\dots,v_{i_r},h_{j_r}\}$ be a cycle of $G(\cP)$ of length $2r$ with $r\geq 3$. By Remark \ref{Remark_Ad un ciclo in G(P) corrisponde ciclo in P} we obtain that $\cC_{G(\cP)}$ defines a primitive cycle in $\cP$ 
	 \[
	\cC_{\cP}: V_{i_1}\cap H_{j_1}, V_{i_2}\cap H_{j_1}, V_{i_2}\cap H_{j_2},\dots, V_{i_r}\cap H_{j_r}, V_{i_1}\cap H_{j_r},  V_{i_1}\cap H_{j_1}
	\]
	We set $a_1=V_{i_1}\cap H_{j_1},a_2= V_{i_2}\cap H_{j_1},\dots, a_{2r-1}=V_{i_r}\cap H_{j_r}, a_{2r}=V_{i_1}\cap H_{j_r},a_{2r+1} = V_{i_1}\cap H_{j_1}$.
	We distinguish two different cases. Firstly, we suppose that all vertices of $\cC_{\cP}$ are either in $V(\cP_1)$ or $V(\cP_2)$. We may assume that $a_k\in V(\cP_1)$ for all $k=1,\dots,2r$. We prove that $\cC_{G(\cP)}$ has a chord. %We prove that $\cC_{\cP}$ has self-crossing.
	Observe that $\cP_1$ is a simple polyomino, otherwise $\cP$ is not a simple collection of cells. Consider the bipartite graph $G(\cP_1)$ attached to $\cP_1$. By Lemma 2.1 in \cite{Simple are prime} it follows that $G(\cP_1)$ is weakly chordal, hence the cycle $\cC_{G(\cP)}$ has a chord. \\ 
	In the second case, we suppose that there exist two distinct vertices different from $\tilde{v}$, one belonging to $V(\cP_1)$ and the other to $V(\cP_2)$.  We prove that $\cC_{\cP}$ has a self-crossing. We denote by $V_{\tilde{v}}$ and $H_{\tilde{v}}$ respectively the vertical and horizontal maximal edge intervals of $\cP$, such that $V_{\tilde{v}}\cap H_{\tilde{v}}=\{\tilde{v}\}$. It is not restrictive to assume that $a_1\in V(\cP_1)\backslash\{\tilde{v}\}$. Let $i$ be the smallest integer such that $a_i\in V(\cP_1)$ and $a_{i+1}\in V(\cP_2)$. We can assume that $[a_i,a_{i+1}]$ is a horizontal interval of $\cP$, so $[a_i,a_{i+1}]$ is contained in $H_{\tilde{v}}$ and it is obvious that $\tilde{v}\in [a_i,a_{i+1}]$.  We note that $a_{2r+1} =a_1\in V(\cP_1)\backslash\{\tilde{v}\}$. Then there exists $p\in\{i+2,\dots,2r\}$ such that $\tilde{v}\in[a_p,a_{p+1}]$, with $a_p,a_{p+1}\notin \{\tilde{v}\}$. Moreover, we note that from the primitivity of $\mathcal{C}_\cP$ it follows immediately that $[a_p,a_{p+1}]\subseteq V_{\tilde{v}}$. Hence we obtain that there exist two distinct indices $i,p\in\{1,\dots,2r\}$ such that:
	\begin{enumerate}
		\item $a_i,a_{i+1}\in H_{\tilde{v}}$ and $a_p,a_{p+1}\in V_{\tilde{v}}$;
		\item $a_i,a_{i+1},a_p,a_{p+1}$ are all distinct because they are the vertices of a primitive cycle in $\cP$;
		\item $V_{\tilde{v}}\cap H_{\tilde{v}}\neq \emptyset$ because obviously $V_{\tilde{v}}\cap H_{\tilde{v}}=\{\tilde{v}\}$.
	\end{enumerate}
 In conclusion, $\cC_{\cP}$ has a self-crossing and as a consequence $\cC_{G(\cP)}$ has a chord.\end{proof} %by Remark \ref{Remark_ Se ha il self-crossing allora c'è la corda}.

\noindent We define the following map:
\begin{align*}
\alpha: V(\cP)&\longrightarrow K[\{v_i,h_j\}:i\in I,j\in J]\\
r&\longmapsto  v_ih_j
\end{align*}
with $r\in V_i\cap H_j$. The toric ring $K[\alpha(v):v\in V(\cP)]$ can be viewed as the edge ring of $G(\cP)$ and it is denoted by $K[G(\cP)]$. Let $S$ be the polynomial ring $K[x_r:r\in V(\cP)]$ and let us consider the following surjective ring homomorphism:
\begin{align*}
\phi: S &\longrightarrow K[G(\cP)]\\
\phi(x_r&)=\alpha(r)
\end{align*}
The toric ideal $J_{\cP}$ is the kernel of $\phi$. It is known (\cite{Ohshugi-Hibi_Koszul graph}, \cite{Ohshugi-Hibi_Generatori ideali torici}) that if the bipartite graph $G(\cP)$ is weakly chordal then the associated toric ideal $J_{\cP}$ is minimally generated by quadratic binomials attached to the cycles of $G(\cP)$ of length 4. 
\begin{thm}\label{Teorema: P collezione semplice dedebolmente connessa allora I=J}
	Let $\cP$ be a weakly connected and simple collection of cells. Then $I_{\cP}=J_{\cP}$.
\end{thm}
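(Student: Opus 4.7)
The plan is to establish the equality by double inclusion. The main ingredients are Proposition \ref{Prop: Se P è debolmente connessa e semplice, allora G(P) è debolm cordale}, which shows that $G(\cP)$ is weakly chordal, together with the Ohsugi--Hibi theorem quoted just before the statement, which then forces $J_{\cP}$ to be minimally generated by the quadratic binomials attached to the $4$-cycles of $G(\cP)$.

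For the inclusion $I_{\cP}\subseteq J_{\cP}$, I would fix an inner $2$-minor $x_a x_b - x_c x_d$ coming from an inner interval $[a,b]$ with $a=(i,j)$, $b=(k,l)$ the diagonal corners and $c=(i,l)$, $d=(k,j)$ the anti-diagonal corners. Because every cell of $[a,b]$ lies in $\cP$, all vertical unit segments on the line $x=i$ between heights $j$ and $l$ belong to $E(\cP)$, so $a$ and $c$ share the same maximal vertical edge interval $V_{i_1}$; arguing symmetrically on the three remaining sides of $[a,b]$ one obtains $b,d\in V_{i_2}$, $a,d\in H_{j_1}$ and $b,c\in H_{j_2}$. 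A direct computation then yields $\phi(x_a x_b)=v_{i_1}v_{i_2}h_{j_1}h_{j_2}=\phi(x_c x_d)$, hence $x_a x_b - x_c x_d\in \ker\phi=J_{\cP}$.

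For the reverse inclusion $J_{\cP}\subseteq I_{\cP}$, by the remark above it is enough to check that the quadratic binomial associated to each $4$-cycle of $G(\cP)$ is an inner $2$-minor of $\cP$. A $4$-cycle $\{v_{i_1},h_{j_1},v_{i_2},h_{j_2}\}$ of $G(\cP)$ determines four vertices $V_{i_s}\cap H_{j_t}$ of $\cP$ sitting at the corners of a proper rectangle $R$, whose four sides consist of unit segments contained in $V_{i_1}\cup V_{i_2}\cup H_{j_1}\cup H_{j_2}\subseteq E(\cP)$. The crux of the argument, and the place where I expect the main obstacle, is to verify that every cell contained in $R$ belongs to $\cP$, so that $R$ is actually an inner interval; a priori one might worry that a hole of $\cP$ could hide inside $R$ by escaping to the exterior through the pinch point $\tilde{v}$ at which distinct connected components of $\cP$ meet.

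This worry is dissolved by a ``no-escape'' argument using simplicity alone. Assume for contradiction that some cell $C$ contained in $R$ is not in $\cP$. By simplicity there is a path of cells not in $\cP$ from $C$ to a cell lying far outside $R$, and this path must contain two consecutive cells $C',C''\notin\cP$ sharing an edge $e$ on $\partial R$. But $e\in E(\cP)$ means that $e$ is an edge of some cell of $\cP$, while the only two cells having $e$ as an edge are $C'$ and $C''$, a contradiction. Since paths of cells are defined by shared edges rather than by single shared vertices, the pinch point $\tilde{v}$ is harmless and the argument works verbatim in the weakly connected setting. Therefore $R$ is an inner interval of $\cP$, the binomial attached to the $4$-cycle is an inner $2$-minor, and the inclusion $J_{\cP}\subseteq I_{\cP}$ is proved.
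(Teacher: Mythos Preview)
Your argument is correct. The inclusion $I_{\cP}\subseteq J_{\cP}$ is handled identically to the paper, and for $J_{\cP}\subseteq I_{\cP}$ you also invoke Proposition~\ref{Prop: Se P è debolmente connessa e semplice, allora G(P) è debolm cordale} and the Ohsugi--Hibi theorem to reduce to quadratic generators attached to $4$-cycles.

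The difference lies in how the quadratic generator is shown to lie in $I_{\cP}$. The paper does \emph{not} argue directly that the rectangle $R$ determined by the $4$-cycle is an inner interval. Instead it asserts that, because $\cP$ is simple, the four corners $a_1,\dots,a_4$ all belong to the vertex set of a single connected component $\cP_j$; it then restricts $\phi$ to $K[x_a:a\in V(\cP_j)]$ and invokes \cite[Theorem~2.2]{Simple are prime} (the simple \emph{polyomino} case) to conclude $f\in J_{\cP_j}=I_{\cP_j}\subseteq I_{\cP}$. Your route is more self-contained: the ``no-escape'' argument shows outright that $R$ is an inner interval of $\cP$, so $f$ is visibly an inner $2$-minor, and no appeal to the earlier paper is needed. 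In fact your argument implicitly justifies the paper's unproved assertion that the four corners lie in one component, since once every cell of $R$ is in $\cP$ they are all pairwise connected through $R$. What the paper's approach buys is brevity by outsourcing this step to the literature; what yours buys is that the reader sees exactly where simplicity is used.
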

\begin{proof}
	Assume that $\cP$ consists of the connected components $\cP_1,\dots,\cP_m$, with $m\geq1$.
	We prove firstly that $I_{\cP}\subseteq J_\cP$. Let $f$ be a generator of $I_{\cP}$, so there exists an inner interval $[a,b]$ of $\cP$, such that $f=x_ax_b-x_cx_d$, where $c,d$ are the anti-diagonal corners of $[a,b]$. It is clear that $\phi(x_ax_b)=\alpha(a)\alpha(b)=\alpha(c)\alpha(d)=\phi(x_cx_d)$, so $f\in J_{\cP}$. Therefore $I_{\cP}\subseteq J_\cP$.  
%	We prove firstly that $I_{\cP}\subseteq J_\cP$. Let $f$ be a generator of $I_{\cP}$. Then there exists an inner interval $[a,b]$ of $\cP$, such that $f=x_ax_b-x_cx_d$, where $c,d$ are the anti-diagonal corners of $[a,b]$. We prove that $f\in J_{\cP}$. We denote by $H_a$ and $V_a$ respectively the maximal horizontal and vertical edge intervals of $\cP$, to which $a$ belongs, and by $H_b$ and $V_b$ respectively the maximal horizontal and vertical edge intervals of $\cP$, to which $b$ belongs. Since $[a,b]$ is an inner interval, the vertices $a$, $c$ and $b$, $d$ are respectively on $V_a$ and $V_b$ and, similarly, the vertices $a$, $d$ and $b$, $c$ are respectively on $H_a$ and $H_b$. Then $\phi(x_ax_b)=h_av_ah_bv_b=\phi(x_cx_d)$, so $f\in J_{\cP}$. Therefore $I_{\cP}\subseteq J_\cP$.  \\
	We prove that $J_{\cP}\subseteq I_\cP$. By Proposition \ref{Prop: Se P è debolmente connessa e semplice, allora G(P) è debolm cordale} the bipartite graph $G(\cP)$ attached to $\cP$ is weakly chordal, so $J_{\cP}$ is generated minimally by quadratic binomials attached to cycles of $G(\cP)$ of length 4. Let $f$ be a generator of $J_{\cP}$. Then there exists a cycle of $G(\cP)$ of length 4, $\cC_{G(\cP)}:v_{i_1},h_{j_1},v_{i_2},h_{j_2}$, such that $f=f_{\cC_{G(\cP)}}$. By Remark \ref{Remark_Ad un ciclo in G(P) corrisponde ciclo in P} $\cC_{G(\cP)}$ defines the following primitive cycle in $\cP$: 
	\[
	\cC_{\cP}: V_{i_1}\cap H_{j_1}, V_{i_2}\cap H_{j_1}, V_{i_2}\cap H_{j_2}, V_{i_1}\cap H_{j_2}, V_{i_1}\cap H_{j_1}.\]
	We set $a_1=V_{i_1}\cap H_{j_1},a_2= V_{i_2}\cap H_{j_1}, a_{3}=V_{i_2}\cap H_{j_2}, a_{4}=V_{i_1}\cap H_{j_2}$ and we have $f=f_{\cC_{G(\cP)}}= f_{\cC_{\cP}}$. %x_{a_1}x_{a_3}-x_{a_2}x_{a_4} 
	Since $\cP$ is a simple collection of cells and $f=f_{\cC_{\cP}}$, there exists $j\in\{1,\dots,m\}$ such that $a_i\in V(\cP_j)$ for all $i=1,2,3,4$.
	 Consider the map $\phi'$ as the restriction of $\phi$ to $K[x_a:a\in V(\cP_j)]$ and we denote by $J_{\cP_j}$ the kernel of $\phi'$. By Theorem 2.2 in \cite{Simple are prime} it follows that $J_{\cP_j}=I_{\cP_j}$, where $I_{\cP_j}$ is the polyomino ideal associated to $\cP_j$. Hence we have $f\in J_{\cP_j}=I_{\cP_j}\subseteq I_{\cP}$. Therefore $J_{\cP}\subseteq I_{\cP}$.
%	 We show that the interval $[a_1,a_3]$ is an inner interval of $\cP$. Suppose that $[a_1,a_3]$ is not an inner interval of $\cP$, then there exists a cell $C$ in $[a_1,a_3]$, not belonging to $\cP$. Consider an interval $[a,b]$ of $\ZZ^2$ such that $\cP\subset [a,b]$ and a cell $D\notin [a,b]$, so $D\notin \cP$. Since $\cP$ is simple and $C,D\notin\cP$, there exists a path $\cF:C=C_1,\dots,C_s=D$ such that $C_i\notin\cP$ for all $i=1,\dots,s$. Observe that $C$ and $D$ are respectively internal and external cells of $\cC_{\cP}$ and $V_{i_1},H_{j_1},V_{i_2},H_{j_2}$ are edge intervals of $\cP$, so there exist $k\in\{1,\dots,s\}$ and an edge $r$ of a cell $G$ of $\cP$ such that $E(C_k)\cap E(C_{k+1})=\{r\}$. Hence either $C_k=G$ or $C_{k+1}=G$, that is a contradiction. Hence $[a_1,a_3]$ is an inner interval of $\cP$ and $f\in I_{\cP}$. Therefore $J_{\cP}\subseteq I_{\cP}$.
\end{proof}

\begin{rmk}\rm
We observe that there exist weakly connected and non-simple collections of cells that are not prime. The collection of cells in Figure \ref{Figura:collezione di celle non semplici e non prima} (A) is non-simple and weakly connected with four connected components but it is not prime. Its non-primality follows by \cite[case (2) of Theorem 3.2, Corollary 3.6]{Qureshi}. Conversely, in Figure \ref{Figura:collezione di celle non semplici e non prima} (B) there is a weakly connected and non-simple collection of cells which is prime. For the proof of its primality we refer to Remark \ref{Remark: dopo la dim coll non sempl e deb connessa primo} of Section \ref{Section: Weakly closed paths}.
\begin{figure}[h!]
	\centering
	\subfloat[]{\includegraphics[scale=0.7]{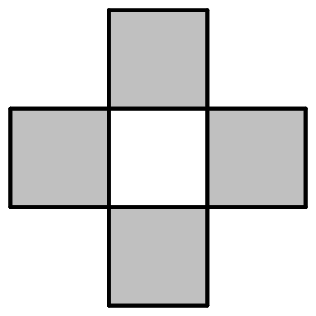}}\qquad \qquad\qquad
	\subfloat[]{\includegraphics[scale=0.7]{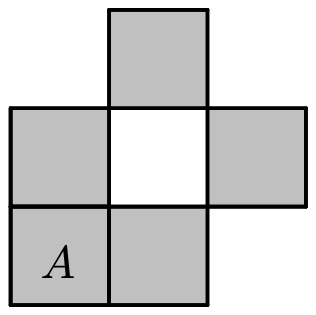}}
	\caption{}
	\label{Figura:collezione di celle non semplici e non prima}
\end{figure}
\end{rmk}

\noindent In according to previous arguments it is natural to generalize the conjecture given in \cite{Trento} for weakly connected collections of cells.

\begin{conj}
	Let $\cP$ be a weakly connected collection of cells. The following are equivalent:
	\begin{enumerate}
		\item $I_{\cP}$ is prime;
		\item $\cP$ has no zig-zag walks.
	\end{enumerate}
\end{conj}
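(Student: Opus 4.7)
The plan is to prove the two directions separately, extending the strategy already established for polyominoes in \cite{Trento} and \cite{Cisto_Navarra}.

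For the implication (1) $\Rightarrow$ (2), I would adapt the argument developed in \cite{Trento}. Given a zig-zag walk $\cW: I_1,\dots,I_\ell$ in $\cP$ with distinguished vertices $v_i, z_i, u_i$, one forms the binomial $f_{\cW}$ naturally associated to the walk by telescoping the inner 2-minors of the intervals $I_i$. The core of the original argument is to exhibit $f_{\cW}$ inside a prime ideal containing $I_{\cP}$ (for instance the kernel of a suitable toric map) while condition (3) in the definition of a zig-zag walk is exactly the combinatorial obstruction preventing $f_{\cW}$ from being expressible as a polynomial combination of inner 2-minors of $\cP$, thereby forcing $I_{\cP}$ to be non-prime. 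This argument is local around each $I_i$ and does not use the connectedness of $\cP$ in any essential way, so it should transfer verbatim to the weakly connected setting.

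For the implication (2) $\Rightarrow$ (1), which is the hard direction, I would split $\cP$ according to whether it is simple. If $\cP$ is simple, then Theorem \ref{Teorema: P collezione semplice dedebolmente connessa allora I=J} identifies $I_{\cP}$ with the toric ideal $J_{\cP}$ of the edge ring of $G(\cP)$, which is automatically prime, and a direct combinatorial check shows that a simple weakly connected collection cannot support a zig-zag walk since any such walk must essentially encircle a hole. The genuine difficulty lies in the non-simple case: here $I_{\cP} \subsetneq J_{\cP}$ in general, and the generators of $J_{\cP}\setminus I_{\cP}$ encode precisely the obstructions coming from the holes. The strategy I would attempt mirrors the one used in Section \ref{Section: Weakly closed paths} for weakly closed paths: construct an explicit toric model for $K[\cP]$ under the zig-zag-free hypothesis and verify combinatorially that every binomial relation in the model already lies in $I_{\cP}$.

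The main obstacle will be this last step in full generality. Unlike weakly closed paths, which admit a clean recursive structure allowing a reduction to a simple subcollection by a controlled sequence of local moves, an arbitrary weakly connected collection may contain holes of unbounded complexity and arbitrary relative position. Any candidate toric model must simultaneously account for all of them, and the combinatorial verification that binomials outside the model correspond precisely to zig-zag walks seems to require a genuinely new structural decomposition theorem for zig-zag-free collections. This is presumably why the authors stop at a conjecture and establish it only for the special class introduced in Section \ref{Section: Weakly closed paths}.
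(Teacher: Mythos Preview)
The statement you are addressing is a \emph{Conjecture} in the paper, not a theorem; the paper offers no proof of it, so there is nothing to compare your proposal against. Your write-up is not a proof either but a strategy sketch, and you correctly recognize this in your final paragraph.

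On the easy direction $(1)\Rightarrow(2)$: your observation that the argument from \cite{Trento} is local and does not use connectedness is accurate, so this direction does extend without difficulty to weakly connected collections. That much is essentially known.

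On the hard direction $(2)\Rightarrow(1)$: your outline is honest about where the gap lies. Splitting into simple and non-simple cases is natural, and the simple case is indeed covered by Theorem~\ref{Teorema: P collezione semplice dedebolmente connessa allora I=J}. For the non-simple case you propose to build an explicit toric model as in Section~\ref{Section: Weakly closed paths}, but as you yourself note, that construction relies on the very restricted geometry of weakly closed paths (a single hole, a path structure, and the presence of specific local configurations pinned down by Theorem~\ref{Teorema: P non ha zig-zag se e solo se non ha ladder...}). No analogue of this classification is available for arbitrary collections with multiple holes, and without it the toric-model step has no content. So your proposal does not close the conjecture; it accurately surveys what is known and where the obstruction sits, which is exactly why the paper leaves it open.
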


\section{Weakly closed path polyominoes and their primality} \label{Section: Weakly closed paths}

%\noindent In \cite{Cisto_Navarra} we introduced the closed path polyominoes and we studied their primality. In this section we define a new class of polyominoes, called \textit{weakly closed paths}, which can be obtained intuitively removing a suitable cell from a closed path. %Our idea is to build a new class of polyominoes, linking the 
\noindent In this section we introduce a new class of polyominoes, which we call weakly closed path polyominoes. As an application of Theorem \ref{Teorema: P collezione semplice dedebolmente connessa allora I=J} and by using similar techniques of \cite{Cisto_Navarra}, we characterize all weakly closed paths having no zig-zag walks and their primality.
\begin{defn}\label{Definizione: weakly closed path}\rm 
	A finite non-empty collection of cells $\cP$ is called a \textit{weakly closed path} if it is a path of $n$ cells $A_1,\dots,A_{n-1},A_n=A_0$ with $n>6$ such that:
	\begin{enumerate}
		\item $|V(A_0)\cap V(A_1)|=1$;
		\item $V(A_2)\cap V(A_{0})=V(A_{n-1})\cap V(A_1)=\emptyset$;
		\item $V(A_i)\cap V(A_j)=\emptyset$ for all $i\in\{1,\dots,n\}$ and for all $j\notin\{i-2,i-1,i,i+1,i+2\}$, where the indices are reduced modulo $n$.
	\end{enumerate}
\end{defn}
\noindent We call the unique vertex $v_H$ in $V(A_0)\cap V(A_1)$ a \textit{hooking corner}. Note that a weakly closed path is a non-simple polyomino having a unique hole. In Figure \ref{Figura:esempi di weakly closed path} there are some examples of weakly closed paths. \\
The difference between a closed path, introduced in \cite{Cisto_Navarra}, and a weakly closed path is subtle but quite deep. In fact in a closed path it is possible to order the cells in such a way that every cell has an edge in common with its consecutive cell. In a weakly closed path the same holds, with the exception of exactly two consecutive cells that have just a vertex in common. These polyominoes, as well as the closed paths, are particular thin polyominoes, which are polyominoes not containing the square tetromino.
%\textcolor{red}{Observe that the very difference between a closed path polyomino, as introduced in \cite{Cisto_Navarra}, and the previous definition of weakly closed path is precisely the presence of the hooking corner, that is we can order the cells of the path in such a way that all properties of a closed path are satisfied, except that the first cell and the last cell have only a vertex in common (not an edge). We mention also that this class of polyominoes, as for closed path polymonioes, is actually a subclass of the more general \emph{thin polyminoes}, defined and studied in \cite{Trento2} and \cite{Trento3}, consisting of polyominoes not containing the square tetromino.}
%in particular in Figure \ref{Figura:esempi di weakly closed path} (A) we represent the shortest weakly closed path which can be built up to reflections or rotations.
\begin{figure}[h]
	\centering
	\subfloat[]{\includegraphics[scale=0.55]{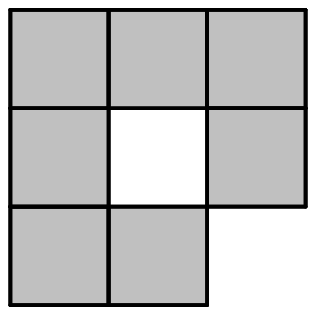}}\qquad \qquad
	\subfloat[]{\includegraphics[scale=0.45]{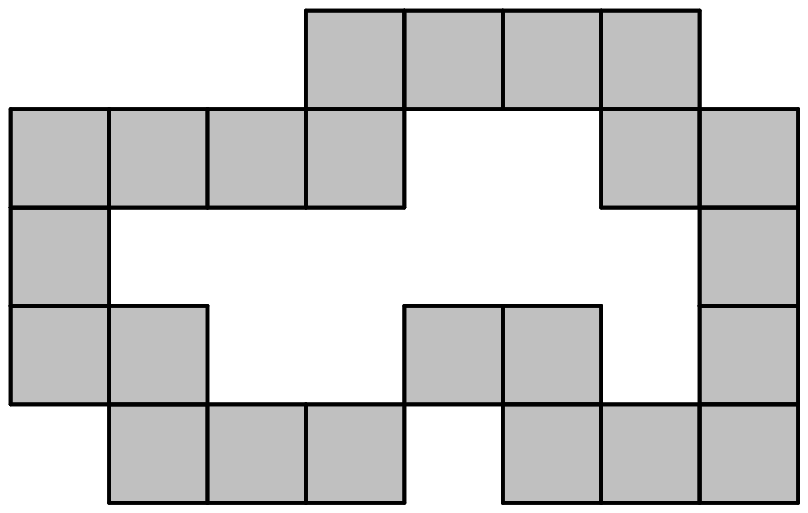}} \qquad \qquad
	\subfloat[]{\includegraphics[scale=0.45]{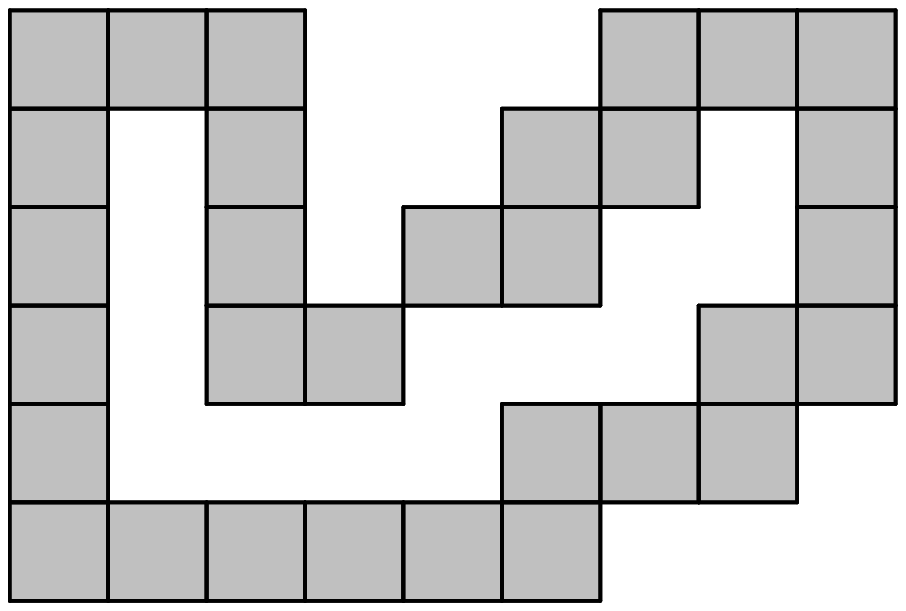}}
	\caption{Examples of weakly closed path polyominoes.}
	\label{Figura:esempi di weakly closed path}
\end{figure}

\noindent  Let $\cP$ be a polyomino. A \textit{weak L-configuration} is a finite collection of cells of $\cP$ such that:
\begin{enumerate}
	\item it consists of a maximal horizontal (resp. vertical) block $[A,B]$ of length two, a vertical (resp. horizontal) block $[D,F]$ of length at least two and a cell $C$ of $\cP$, not belonging to $[A,B]\sqcup [D,F]$;
	\item $V(C)\cap V([A,B])=\{a_1\}$ and $V([D,F])\cap V([A,B])=\{a_2,b_2\}$, where $a_2\neq b_2$;
	\item $[a_2,b_2]$ is on the same maximal horizontal (resp. vertical) edge interval of $\cP$ containing $a_1$ (see Figure \ref{Figura:esempio weak L configuration}). 
\end{enumerate}

\begin{figure}[h]
	\centering
	\subfloat{\includegraphics[scale=0.7]{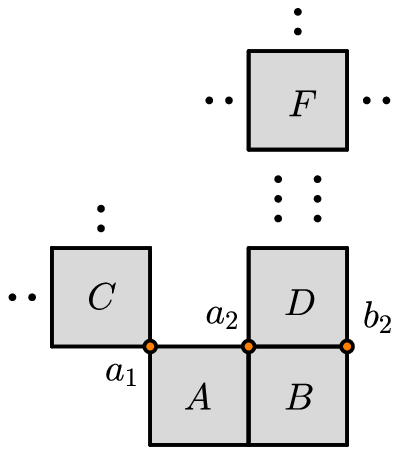}}\qquad\qquad
	\subfloat{\includegraphics[scale=0.7]{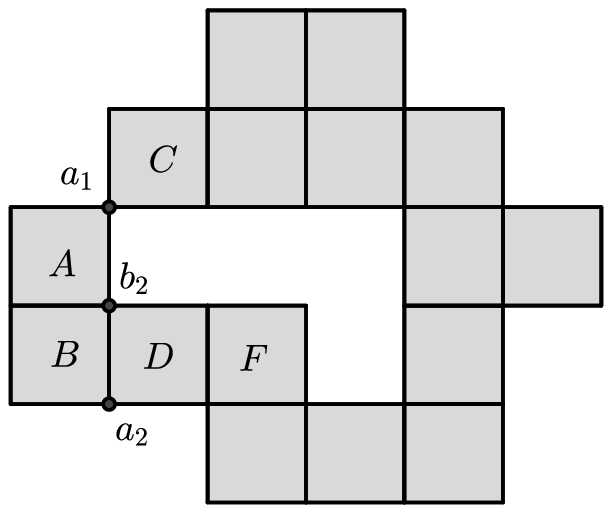}} 
	\caption{A Weak L-configuration and a polyomino
containing a weak L-configuration}
	\label{Figura:esempio weak L configuration}
\end{figure}

\noindent A finite collection of cells of $\cP$, made up of a maximal horizontal (resp. vertical) block $[A,B]$ of $\cP$ of length at least two and two distinct cells $C$ and $D$ of $\cP$, not belonging to $[A,B]$, with $V(C)\cap V([A,B])=\{a_1\}$ and $V(D)\cap V([A,B])=\{a_2,b_2\}$ where $a_2\neq b_2$, is called a \textit{weak ladder} if $[a_2,b_2]$ is not on the same maximal horizontal (resp. vertical) edge interval of $\cP$ containing $a_1$ (see Figure \ref{Figura:esempio weak ladder}).
\begin{figure}[h]
	\centering
	\subfloat{\includegraphics[scale=0.7]{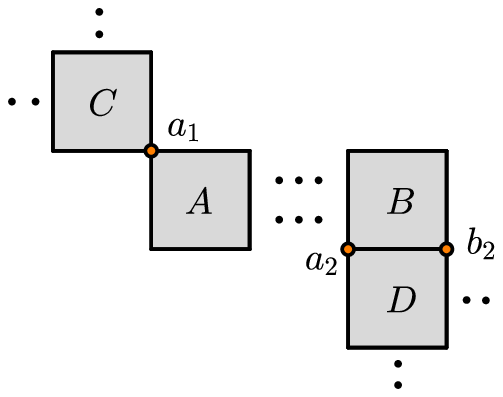}}\qquad\qquad
	\subfloat{\includegraphics[scale=0.7]{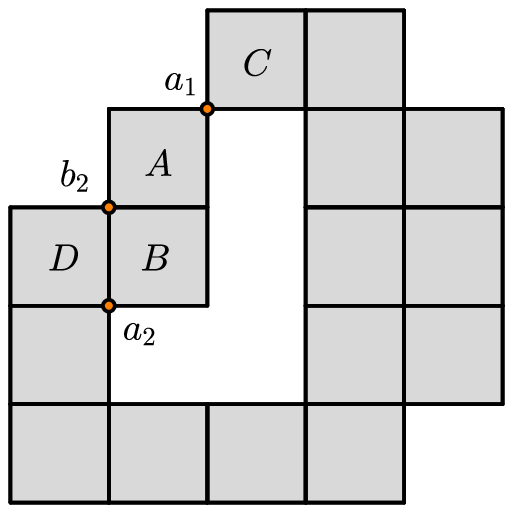}} 
	\caption{A Weak ladder and a polyomino containing a weak ladder}
	\label{Figura:esempio weak ladder}
\end{figure} 

\noindent  As introduced in \cite{Cisto_Navarra}, we say that a path of five cells $C_1, C_2, C_3, C_4, C_5$ of $\cP$ is an \textit{L-configuration} if the two sequences $C_1, C_2, C_3$ and $C_3, C_4, C_5$ go in two orthogonal directions. A set $\cB=\{\cB_i\}_{i=1,\dots,n}$ of maximal horizontal (or vertical) blocks of length at least two, with $V(\cB_i)\cap V(\cB_{i+1})=\{a_i,b_i\}$ and $a_i\neq b_i$ for all $i=1,\dots,n-1$, is called a \textit{ladder of $n$ steps} if $[a_i,b_i]$ is not on the same edge interval of $[a_{i+1},b_{i+1}]$ for all $i=1,\dots,n-2$. For instance, in Figure \ref{Figura:esempio L-configuration e ladder} we represent a polyomino with an $L$-configuration on the left and a polyomino having a ladder of three steps on the right.  
\begin{figure}[h]
	\centering
	\subfloat{\includegraphics[scale=0.7]{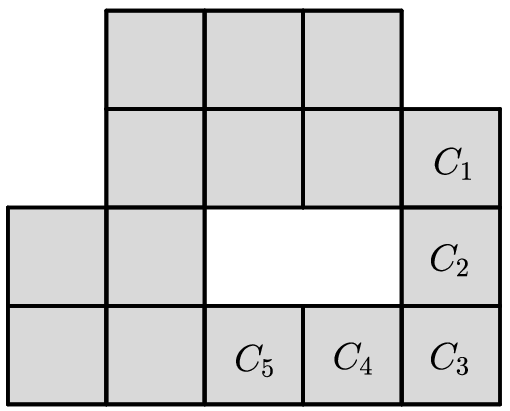}}\qquad\qquad
	\subfloat{\includegraphics[scale=0.7]{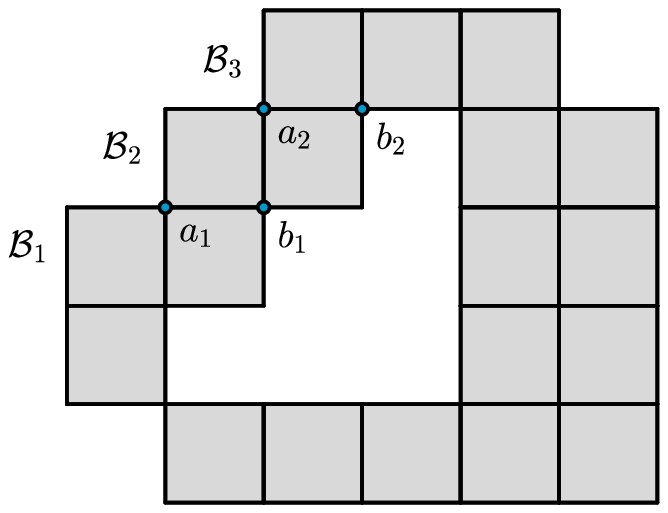}} 
	\caption{An example of $L$-configuration and horizontal ladder of three steps.}
	\label{Figura:esempio L-configuration e ladder}
\end{figure}

\begin{prop}\label{Proposizione: Se ha una L-conf, weak L-conf ecc, allora P non ha zig-zag}
	Let $\cP$ be a weakly closed path. If one of the following conditions holds:
	\begin{enumerate}
		\item $\cP$ has a weak $L$-configuration,
		\item $\cP$ has a weak ladder,
		\item $\cP$ has an $L$-configuration,
		\item $\cP$ has a ladder of at least three steps,
	\end{enumerate}
    then $\cP$ does not contain zig-zag walks.
\end{prop}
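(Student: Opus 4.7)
The plan is to argue by contradiction. Suppose $\cP$ admits a zig-zag walk $\cW: I_1, \ldots, I_\ell$. A weakly closed path is a thin polyomino (it contains no square tetromino), so every proper inner interval of $\cP$ is a horizontal or vertical strip. I would begin by classifying the possible local configurations of two consecutive intervals $I_i, I_{i+1}$ of $\cW$: they must share the vertex $v_{i+1}$, and the segment from $v_i$ to $v_{i+1}$ must lie along a maximal edge interval of $\cP$. This gives a small catalogue of allowed local transitions of $\cW$.

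For cases (3) and (4), the arguments parallel those given for closed paths in \cite{Cisto_Navarra}. I would identify a cell in the configuration that is a corner or diagonal corner of some $I_i$ of $\cW$, and show that the diagonal corner $z_i$, together with the diagonal corner $z_j$ of a nearby $I_j$, must lie in the vertex set of a common inner interval of $\cP$, violating condition (3). For an $L$-configuration, this common inner interval is forced by the corner cell $C_3$ and its two perpendicular neighbors; for a ladder of at least three steps, two intermediate step-blocks force the collision of $z$-corners.

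For the new cases (1) and (2), the weak $L$-configuration and weak ladder, I would adapt the same approach using the distinguished cell $C$ together with the blocks $[A,B]$ and $[D,F]$. The new ingredient is the single-vertex attachment of $C$ to $[A,B]$ at $a_1$. In the weak $L$-configuration case, the hypothesis that $[a_2, b_2]$ lies on the same maximal edge interval of $\cP$ as $a_1$ means that any zig-zag walk traversing this edge interval must incorporate cells of $[A,B]$ together with either $C$ or $[D,F]$ in a restricted way; tracing the $v_i, z_i$ sequence along these cells then yields two $z$-corners lying in a common inner interval. The weak ladder case is analogous, using that $[a_2, b_2]$ is on a maximal edge interval distinct from the one containing $a_1$ to force the same sort of collision.

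The main obstacle will be the exhaustive case analysis: one has to consider every orientation and entry/exit direction of $\cW$ at the configuration, and in particular the sub-cases in which the configuration sits near the hooking corner $v_H$, where two cells of $\cP$ meet only at a vertex. The hooking corner produces sub-cases not present in the closed path setting of \cite{Cisto_Navarra}, and verifying that each of these still forces a violation of condition (3) would be the bulk of the work. I expect each sub-case to either reduce directly to a collision of two $z$-corners in a common inner strip of $\cP$, or to reduce to one of the configurations already handled in \cite{Cisto_Navarra}.
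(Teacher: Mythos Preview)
Your overall strategy---assume a zig-zag walk $\cW:I_1,\dots,I_\ell$ exists and derive a contradiction by examining how $\cW$ must pass through the given configuration---is exactly the paper's approach, and for cases (1) and (3) your plan is right: one shows that two consecutive $z$-corners $z_{i+1},z_{i+2}$ land in the vertex set of a common inner interval of $\cP$, contradicting condition~(3) of the definition of zig-zag walk. In the weak $L$-configuration, once $I_i\cap I_{i+1}=\{a_1\}$ and $I_{i+1}\cap I_{i+2}=\{a_2\}$, the corner $z_{i+1}$ is forced to be the lower-right corner of $A$ while $z_{i+2}\in V([D,F])$, and the inner interval $[B,F]$ contains both.

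However, for cases (2) and (4) you have misidentified which axiom fails. In the weak ladder (and likewise the ladder of $\geq 3$ steps) the contradiction is with condition~(2), not condition~(3): after $I_i\cap I_{i+1}=\{a_1\}$, the next intersection $I_{i+1}\cap I_{i+2}$ must be $\{a_2\}$ or $\{b_2\}$, and by hypothesis $[a_2,b_2]$ is \emph{not} on the maximal edge interval of $\cP$ containing $a_1$. Hence $v_{i+1}$ and $v_{i+2}$ lie on different maximal edge intervals, and $\cW$ is not a zig-zag walk. There is no $z$-corner collision to chase here; trying to force one, as you propose, does not obviously succeed because the offset geometry of a ladder keeps the relevant $z$-corners in separate strips.

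Finally, the exhaustive case analysis you anticipate around the hooking corner $v_H$ is not needed. The argument is entirely local to the configuration: once you know some $I_i$ contains the distinguished cell $C$ and meets its neighbour at $a_1$, the next one or two steps of $\cW$ are forced by the shape of the configuration alone, independently of where $v_H$ sits. The paper's proof of each case is only a few lines.
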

\begin{proof}
	(1) Suppose that $\cP$ has a weak $L$-configuration. Assume that the weak $L$-configuration is as in the picture on the left in Figure \ref{Figura:esempio weak L configuration}, otherwise we apply suitable reflections or rotations in order to have it in such a position.
	Suppose that there exists a sequence $\cW:I_1,\dots,I_\ell$ of distinct inner intervals of $\cP$, where for all $i=1,\dots,\ell$ the interval $I_i$ has diagonal corners $v_i$, $z_i$ and anti-diagonal corners $u_i$, $v_{i+1}$, such that $I_1\cap I_\ell=\{v_1=v_{\ell+1}\}$ and $I_i\cap I_{i+1}=\{v_{i+1}\}$, for all $i=1,\dots,\ell-1$. We may assume that $v_i$ and $v_{i+1}$ are on the same edge interval of $\cP$ for all $i=1,\dots,\ell$, otherwise we have finished. Observe that there exists $i\in\{1,\dots,\ell\}$ such that $C\in I_i$ and $I_i\cap I_r=\{a_1\}$ where $r=i-1$ or $r=i+1$. In such a case it is not restrictive to assume $1<i<\ell-1$ and $r=i+1$. It follows from the shape of the weak $L$-configuration that $I_{i+1}\cap I_{i+2}=\{a_{2}\}$ and $z_{i+1}$ is the lower right corner of $A$. The corner $z_{i+2}$ of $I_{i+2}$ belongs to $V([D,F])$ and is on the vertical edge interval of $\cP$ containing $b_2$, so $[B,F]$ is an inner interval such that $z_{i+1},z_{i+2}\in V([B,F])$. Therefore $\cP$ cannot contain zig-zag walks.  \\
	(2) Suppose that $\cP$ has a weak ladder. Assume that there exists a sequence $\cW:I_1,\dots,I_\ell$ of distinct inner intervals of $\cP$ such that $I_i\cap I_{i+1}=\{v_{i+1}\}$ for all $i=1,\dots,\ell-1$ and $I_1\cap I_\ell=\{v_1=v_{\ell+1}\}$. Then there exists $i\in\{1,\dots,n\}$ such that $C\in I_i$ and $I_i\cap I_r=\{a_1\}$ where $r=i-1$ or $r=i+1$. It is not restrictive to assume $1<i<\ell-1$ and $r=i+1$. For the shape of the weak ladder we have either $I_{i+1}\cap I_{i+2}=\{a_{2}\}$ or $I_{i+1}\cap I_{i+2}=\{b_{2}\}$. Then $v_{i+1}$ and $v_{i+2}$ are not on the same edge interval of $\cP$, so $\cP$ cannot contain zig-zag walks. \\
	(3) If $\cP$ has an $L$-configuration, then we have the desired conclusion by arguing as done in (1).\\
	(4) If $\cP$ has a ladder of at least three steps, then the claim follows similarly as done in (2).	
\end{proof}

\begin{thm}\label{Teorema: P non ha zig-zag se e solo se non ha ladder...}
	Let $\cP$ be a weakly closed path. The following conditions are equivalent:
	\begin{enumerate}
		\item $\cP$ has an $L$-configuration or a ladder of at least three steps or a weak $L$-configuration or a weak ladder;
		\item $\cP$ does not contain zig-zag walks.
	\end{enumerate}
\end{thm}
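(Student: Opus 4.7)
The implication $(1)\Rightarrow(2)$ is exactly Proposition \ref{Proposizione: Se ha una L-conf, weak L-conf ecc, allora P non ha zig-zag}, so only the converse needs work. I would prove $(2)\Rightarrow(1)$ by contrapositive: assume $\cP$ contains none of the four configurations and exhibit a zig-zag walk in $\cP$. The argument should run parallel to the corresponding theorem for closed paths proved in \cite{Cisto_Navarra}, with an additional local analysis at the hooking corner $v_H$.

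The plan is first to extract strong structural consequences from the four absences. Since $\cP$ is a thin polyomino that has no ladder of at least three steps and no weak ladder, any maximal horizontal or vertical block of $\cP$ has length exactly two, and consecutive maximal blocks in the cyclic order $A_1,\dots,A_n=A_0$ must "overlap" on the long side in the second-cell position; absence of $L$-configurations and weak $L$-configurations then forces the direction changes along the path to be of one specific type at every cell that is the extremal cell of a maximal block. Thus, labelling the cells cyclically, one can identify a sequence of "bends" $B_1,B_2,\dots,B_s$ of $\cP$ (the places where the path changes direction), and at each bend $B_k$ there is a canonically associated $2\times 2$ inner interval $I_k$ whose diagonal/anti-diagonal corners are prescribed by the orientation of the two maximal blocks meeting at $B_k$.

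Next I would verify that the sequence $\cW:I_1,\dots,I_s$ is a zig-zag walk. Condition (1) in the definition of a zig-zag walk follows because consecutive bends share a single corner $v_{k+1}$ (the shared vertex of the two maximal blocks meeting there), and by going around the weakly closed path one returns to the starting corner $v_1$. Condition (2) follows from the rigid block structure: between $v_k$ and $v_{k+1}$ lies a single maximal edge interval of $\cP$ (the long side of the block between two consecutive bends), and the orientation type of $I_k$ alternates accordingly. Condition (3) requires that the "outer" corners $z_k$ of the $I_k$ cannot be jointly contained in any other inner interval; this follows from thinness of $\cP$ together with the absence of $L$-configurations and ladders of length three, which prevent the formation of any $2\times 2$ sub-block of $\cP$ connecting two distinct $z_k$'s.

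The main obstacle, and the reason the "weak" conditions appear, will be the verification of all these conditions \emph{at the hooking corner} $v_H$. Near $v_H$ the cells $A_0,A_1$ meet only at a vertex, so the bend-structure of the path is different from an interior bend; the local geometry is exactly described by a weak $L$-configuration (when the block adjacent to $v_H$ has length two) or a weak ladder (when the two blocks adjacent to $v_H$ turn the "wrong" way). Excluding both of these forces the $2\times 2$ inner interval $I_k$ associated to the bend at $v_H$ to be oriented so that $v_H$ becomes one of its diagonal or anti-diagonal corners shared with the two neighbouring intervals $I_{k-1}$, $I_{k+1}$, making the sequence close up as required. This case analysis at $v_H$, carried out by reflection/rotation into a canonical position and then reading off the adjacencies of $A_{n-1},A_0,A_1,A_2$, is the most technical step; once it is complete, the zig-zag walk $\cW$ is constructed and the contrapositive is established.
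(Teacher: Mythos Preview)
Your overall strategy---contrapositive, structural analysis, explicit construction of a zig-zag walk---matches the paper's, but the key structural claim you extract is wrong, and this breaks the construction.

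You assert that in the absence of the four configurations ``any maximal horizontal or vertical block of $\cP$ has length exactly two'' and that the $I_k$ are $2\times2$ inner intervals sitting at the bends. This is false; the four absences force almost the opposite. Working from the hooking corner with $\cB_1=[A_1,A_m]$ horizontal, the exclusion of a weak ladder (together with Definition~\ref{Definizione: weakly closed path}) forces $A_{m+1}$ to be North of $A_m$; the exclusion of $L$-configurations and weak $L$-configurations then forces $A_{m+2}$ to be East of $A_{m+1}$; and a further step shows $A_{m+3},A_{m+4}$ continue in a single direction. The resulting block $\cB_2$ has length $\geq 3$, and the same analysis propagates inductively to give a sequence $\cB_2,\cB_3,\dots$ of maximal blocks each of length at least three, joined by single ``stair'' cells. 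Long blocks are therefore the generic feature of a configuration-free weakly closed path, not the exception.

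Consequently the inner intervals in the zig-zag walk are not $2\times2$ squares. In the paper one takes $I_k=V(\cB_k)$ when $\cB_{k-1}$ and $\cB_k$ are orthogonal (so $V(\cB_{k-1})\cap V(\cB_k)$ is a single vertex $v_k$), and $I_k=V(\cB_k\setminus\{\text{first cell}\})$ when they are parallel (so the intersection is an edge $\{a_{k-1},b_{k-1}\}$ and one sets $v_k=b_{k-1}$). The three zig-zag conditions are then checked from this block description, with a separate case analysis of how $\cB_s,\cB_{s+1},\cB_1$ can be arranged around $v_H$. Your instinct that the hooking corner requires its own local analysis is correct, but it rests on an incorrect global picture, so the argument as written does not go through.
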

\begin{proof}
	The sufficient condition follows immediately from Proposition \ref{Proposizione: Se ha una L-conf, weak L-conf ecc, allora P non ha zig-zag}. We prove the necessary one arguing by contradiction. Suppose that $\cP$ has no $L$-configuration, no ladder of at least three steps, no weak $L$-configuration and no weak ladder and we show how it is possible to find a zig-zag walk in $\cP$. We may assume that $v_H$ is respectively the lower right corner of $A_0$ and the upper left corner of $A_1$. Let $\cB_1$ be the maximal horizontal or vertical block of $\cP$ containing $A_1$. We may assume that $\cB_1$ is in horizontal position, because similar arguments hold in the other case. We set $I_1=V(\cB_1)$ and $v_1=v_H$ and $z_1$ as anti-diagonal corners. %Since $\cP$ is a weakly closed path, we can walk along the path from $A_1$ to $A_n$. 
	Let $A_m$ be the cell of $\cP$ such that $[A_1,A_m]=\cB_1$ for some $m\in\{2,\dots,n\}$. For the cell $A_{m+1}$ the following cases are possible:
	\begin{enumerate}
		\item $A_{m+1}$ is at East of $A_m$. It is a contradiction to the maximality of $\cB_1$;
		\item $A_{m+1}$ is at South of $A_m$. Then $\{A_0\}\cup \cB_1\cup \{A_{m+1}\}$ is a weak ladder, so it is a contradiction;
		\item $A_{m+1}$ is at West of $A_m$. Then $A_{m+1}=A_{m-1}$, so it is a contradiction to Definition \ref{Definizione: weakly closed path}.
	\end{enumerate}
	Necessarily $A_{m+1}$ is at North of $A_m$. Now we consider the cell $A_{m+2}$, and we examine its positions with respect to $A_{m+1}$:
	\begin{enumerate}
		\item $A_{m+2}$ is at West of $A_{m+1}$. It is a contradiction to (3) of Definition \ref{Definizione: weakly closed path};
		\item $A_{m+2}$ is at North of $A_{m+1}$. Then $\{A_0\}\cup \cB_1\cup [A_{m+1},A_{m+2}]$ is a weak $L$-configuration if $|\cB_1|=2$ or $\cB_1\cup [A_{m+1},A_{m+2}]$ contains an $L$-configuration if $|\cB_1|>2$, so we have a contradiction in both cases;
		\item $A_{m+2}$ is at South of $A_{m+1}$. Then $A_{m+2}=A_{m}$, so it is a contradiction.
	\end{enumerate}
	Necessarily $A_{m+2}$ is at East of $A_{m+1}$. We observe that the cell $A_{m+3}$ can be at North or at East of $A_{m+2}$. If $A_{m+3}$ is at North of $A_{m+2}$, then by previous arguments $A_{m+4}$ is also at North of $A_{m+3}$, so we denote by $\cB_2$ the maximal vertical block containing $\{A_{m+2},A_{m+3},A_{m+4}\}$ and $V(\cB_1)\cap V(\cB_2)=\{p_1\}$; in such a case we set $I_2=V(\cB_2)$ having $v_2=p_1$ and $z_2$ as diagonal corners. If $A_{m+3}$ is at East of $A_{m+2}$, then $A_{m+4}$ is also at East of $A_{m+3}$, so we denote by $\cB_2$ the maximal horizontal block containing $\{A_{m+2},A_{m+3},A_{m+4}\}$ and $V(\cB_1)\cap V(\cB_2)=\{a_1,b_1\}$, with $a_1<b_1$; in such a case we set $I_2=V(\cB_2\setminus \{A_{m+1}\})$ having $v_2=b_1$ and $z_2$ as diagonal corners. In both cases $|\cB_2|\geq3$. Let $\cB$ be the maximal block containing $A_{n}$ and let $A_p$ be the other extremal cell of $\cB$ for some $p \leq n-1$. If $\cB$ is in vertical position, then $A_{p-1}$ is at East of $A_p$ and $A_{p-2}$ is at North of $A_{p-1}$ by similar arguments. Similarly, if $\cB$ is in horizontal position, then $A_{p-1}$ is at South of $A_p$ and $A_{p-2}$ is at West of $A_{p-1}$. Moreover it is easy to see that $A_{p-2}$ is a cell of a maximal block of $\cP$ of length at least three, denoted by $\cB_f$.\\
	Now, starting from $\cB_2$, we define inductively a sequence of maximal blocks of $\cP$ and, as a consequence, a sequence of inner intervals of $\cP$. 
	Let $\cB_k$ be a maximal block of $\cP$ of length at least three. We may assume that $\cB_k$ is in horizontal position and that there exist $A_{i_k}$ and $A_{i_{k+1}}$ with $i_k<i_{k+1}$ such that $\cB_k=[A_{i_k},A_{i_{k}+1}]$, otherwise we can apply appropriate reflections or rotations. For convenience we set $j=i_{k}+1$. In order to define $\cB_{i+1}$, we distinguish two cases, which depend on the position of $A_{{i_k}-1}$ with respect to $A_{i_k}$. Assume that $A_{i_k-1}$ is at North of $A_{i_k}$ and observe that $A_{i_{k}-2}$ is necessarily at West of $A_{i_{k}-1}$, otherwise $\{A_{i_{k}-2},A_{i_{k}-1}\}\cup \cB_k$ contains an $L$-configuration or Definition \ref{Definizione: weakly closed path} is contradicted. Consider the cell $A_{j+1}$, so $A_{j+1}$ is at North of $A_j$. In fact, if $A_{j+1}$ is at South of $A_j$, then either $\{A_{i_{k}-2},A_{i_{k}-1}\}\cup \cB_k\cup \{A_{j+1},A_{j+2}\}$ is a ladder of three steps or $\cB_k\cup \{A_{j+1},A_{j+2}\}$ contains an $L$-configuration or Definition \ref{Definizione: weakly closed path} is contradicted. By similar arguments we deduce that $A_{j+2}$ is at East of $A_{j+1}$. Now we can define the maximal block $\cB_{k+1}$, depending on the position of $A_{j+3}$.
	\begin{itemize}
	\item If $A_{j+3}$ is at East of $A_{j+2}$, then we denote by $\cB_{k+1}$ the maximal horizontal block of $\cP$ containing $\{A_{j+1},A_{j+2},A_{j+3}\}$. In such a case $|V(\cB_k)\cap V(\cB_{k+1})|=2$ and we set $V(\cB_k)\cap V(\cB_{k+1})=\{a_k,b_k\}$ with $a_k<b_k$. Hence we put $I_{k+1}=V(\cB_{k+1}\setminus \{A_{j+1}\})$ and $v_{k+1}=b_k$, $z_{k+1}$ as diagonal corners (see Figure \ref{Figura:costruzione B_k dimostrazione cond nec e suff no zig-zag 1} (A)).
	\item 	If $A_{j+3}$ is at North of $A_{j+2}$, then $A_{j+4}$ is at North of $A_{j+3}$, otherwise we have a contradiction, since $B_k\cup \{ A_{j+1}, A_{j+2} \} \cup \{ A_{j+3}, A_{j+4} \}$ would be a ladder of three steps. So we denote by $\cB_{k+1}$ the  maximal vertical block of $\cP$ containing $\{A_{j+2},A_{j+3},A_{j+4}\}$. In such a case $|V(\cB_k)\cap V(\cB_{k+1})|=1$ and we set $V(\cB_k)\cap V(\cB_{k+1})=\{p_k\}$. Hence we put $I_{k+1}=V(\cB_{k+1})$ having $v_{k+1}=p_k$, $z_{k+1}$ as diagonal corners (see Figure \ref{Figura:costruzione B_k dimostrazione cond nec e suff no zig-zag 1} (B)).
	\end{itemize}
		\begin{figure}[h!]
	\centering
	\subfloat[]{\includegraphics[scale=0.75]{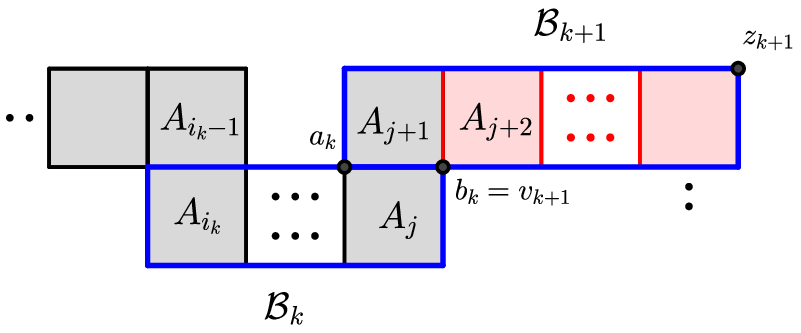}}\qquad
	\subfloat[]{\includegraphics[scale=0.75]{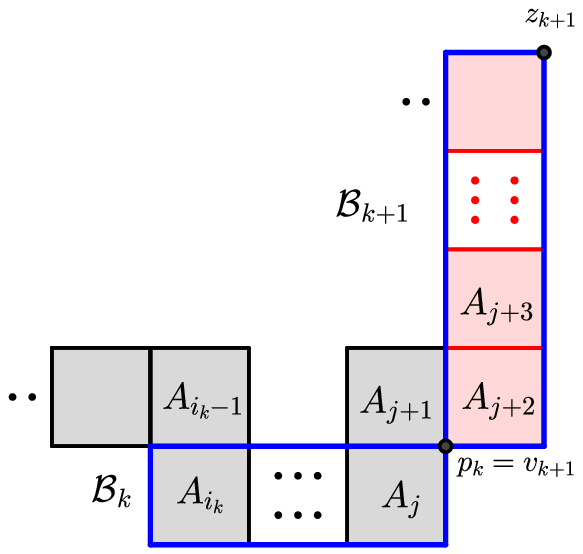}} 
	\caption{}
	\label{Figura:costruzione B_k dimostrazione cond nec e suff no zig-zag 1}
\end{figure} 
	Assume that $A_{{i_k}-1}$ is at South of $A_{i_k}$. By similar arguments, we can define the maximal block $\cB_{k+1}$ and the inner interval $I_{k+1}$, which has $v_{k+1}$ and $z_{k+1}$ as anti-diagonal corners, as done in the previous case (see Figure \ref{Figura:costruzione B_k dimostrazione cond nec e suff no zig-zag 2}).
		\begin{figure}[h!]
		\centering
		\subfloat[]{\includegraphics[scale=0.75]{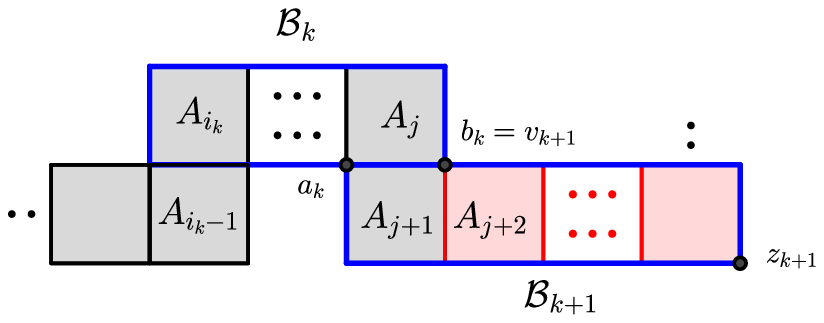}} \qquad
		\subfloat[]{\includegraphics[scale=0.75]{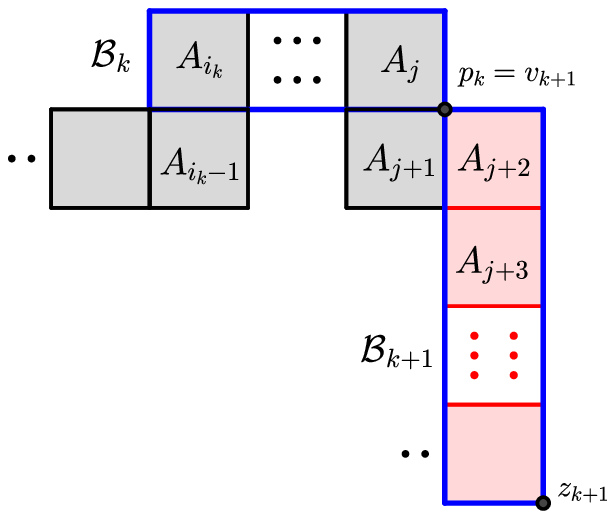}} 
		\caption{}
		\label{Figura:costruzione B_k dimostrazione cond nec e suff no zig-zag 2}
	\end{figure} 
	Observe that there exists a configuration in which $\cB_{k}$ is in horizontal position and $\cB_{k+1}$ is in vertical position, otherwise we have a contradiction to (1) of Definition \ref{Definizione: weakly closed path}. Starting from $k=2$ and using the procedure described before, we define the sequence of maximal block $\cB_2,\cB_3,\dots$ and, since $\cP$ is a weakly closed path, in particular a path from $A_1$ to $A_{p-2}$, then there exists $s\in \N$ such that $\cB_s=\cB_f$. We set $\cB_{s+1}=\cB$ and we observe that the only possible arrangements of the blocks $\cB_s, \cB_{s+1}$ and $\cB_1$ are displayed in Figure \ref{Figura:come si chiudono B_s e B_1 dimostrazione cond nec e suff no zig-zag 1} and \ref{Figura:come si chiudono B_s e B_1 dimostrazione cond nec e suff no zig-zag 2}. In particular, in the configurations of Figure \ref{Figura:come si chiudono B_s e B_1 dimostrazione cond nec e suff no zig-zag 1} we put $I_{s+1}=V(\cB_{s+1})$ having $v_{s+1}=p_s$, $z_{s+1}$ as diagonal corners, and in the configurations of Figure \ref{Figura:come si chiudono B_s e B_1 dimostrazione cond nec e suff no zig-zag 2} we put $I_{s+1}=V(\cB_{s+1}\setminus\{A_p\})$ having $v_{s+1}=b_s$, $z_{s+1}$ as diagonal corners. 
	\begin{figure}[h!]
		\centering
		\subfloat{\includegraphics[scale=0.75]{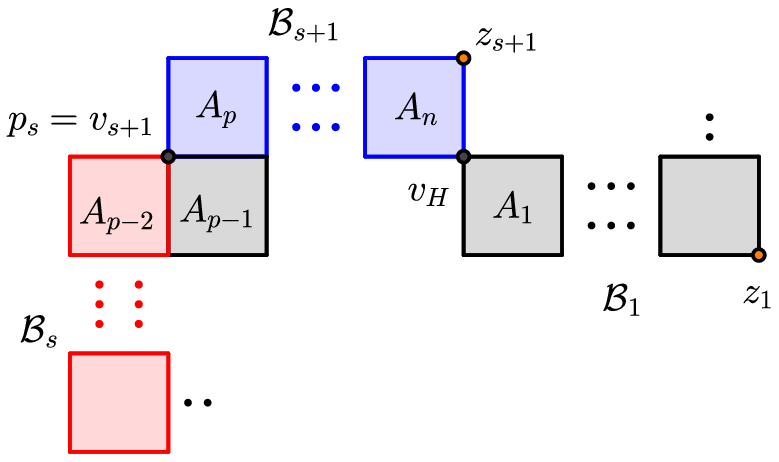}}\qquad
		\subfloat{\includegraphics[scale=0.75]{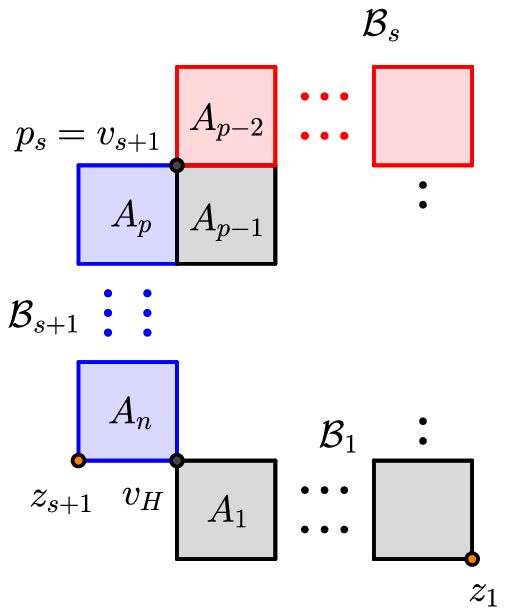}}
		\caption{}
		\label{Figura:come si chiudono B_s e B_1 dimostrazione cond nec e suff no zig-zag 1}
	\end{figure} 
	\begin{figure}[h!]
		\centering\subfloat{\includegraphics[scale=0.75]{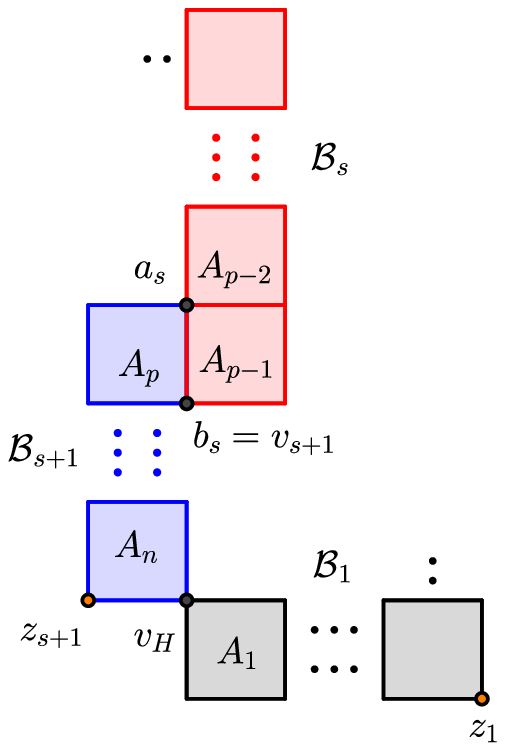}}
		\quad
		\subfloat{\includegraphics[scale=0.75]{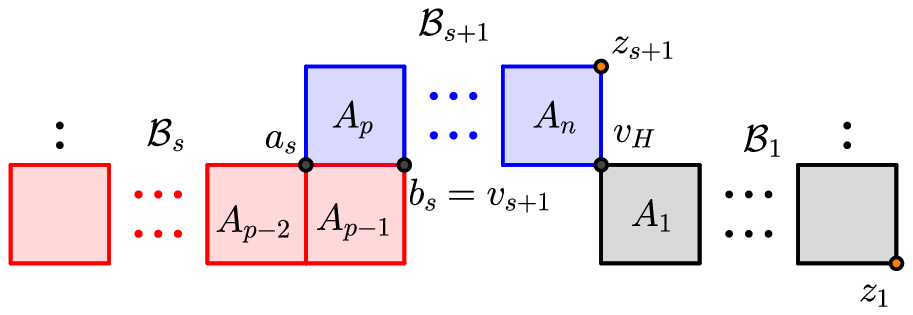}}
		\caption{}
		\label{Figura:come si chiudono B_s e B_1 dimostrazione cond nec e suff no zig-zag 2}
	\end{figure} 

\noindent Hence there exists a sequence of maximal blocks $\cB_1,\dots,\cB_s,\cB_{s+1}$ of $\cP$ with $V(\cB_1)\cap V(\cB_{s+1})=\{v_H\}$ and a sequence $I_1,I_2,\dots,I_s,I_{s+1}$ of inner intervals of $\cP$ with $I_k\subseteq V(\cB_k)$ for all $k=1,\dots,s+1$, having the properties described before. We prove that $\cW:I_1,\dots,I_s, I_{s+1}$ is a zig-zag walk of $\cP$. 
	\begin{enumerate}
		\item It is clear by the previous construction that $I_{s+1}\cap I_1=\{v_H\}$ and $I_k\cap I_{k+1}=\{v_{k+1}\}$ for all $k=1,\dots,s$.
		\item Let $k\in\{1,\dots,s\}$. Firstly suppose that $k\in\{2,3,\dots,s-1\}$. Consider the blocks $\cB_{k-1}$, $\cB_{k}$ and $\cB_{k+1}$. We may assume that $\cB_{k-1}$ is in horizontal position and that there exist $A_{i_k}$ and $A_{i_{k+1}}$ with $i_k<i_{k+1}$ such that $\cB_{k-1}=[A_{i_k},A_{i_{k+1}}]$ and $A_{{i_k}-1}$ is at North of $A_{i_k}$, otherwise we can do opportune reflections or rotations. Assume that $\cB_{k}$ is in horizontal position. By the construction of $\cB_k$ and $\cB_{k+1}$, we have the situation described in Figure \ref{Figura:dimostrazione cond nec e suff no zig-zag v_k stanno sullo stesso edge interval} (A), where the dashed lines indicate the block $\cB_{k+1}$ depending on its position. Therefore it follows that $v_k$ and $v_{k+1}$ are on the same edge interval of $\cP$. The same holds if $\cB_k$ is in vertical position, in particular see Figure \ref{Figura:dimostrazione cond nec e suff no zig-zag v_k stanno sullo stesso edge interval} (B).
		 \begin{figure}[h!]
			\centering\subfloat[]{\includegraphics[scale=0.6]{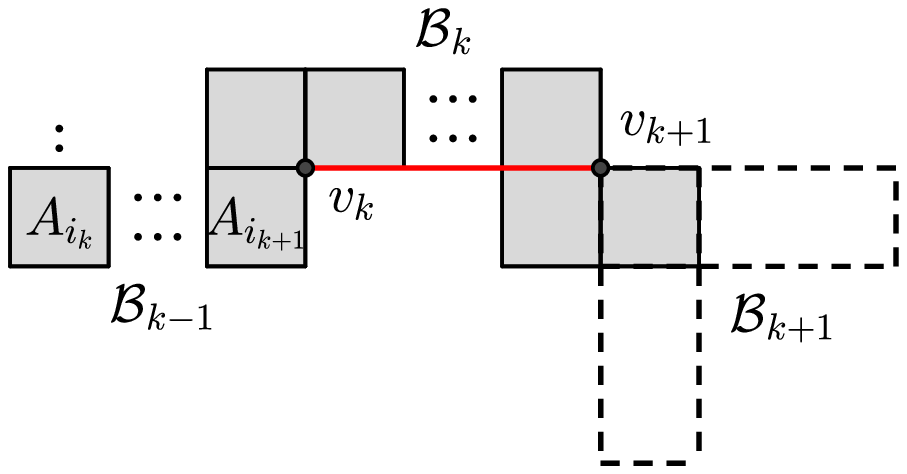}}
			\qquad\qquad
			\subfloat[]{\includegraphics[scale=0.6]{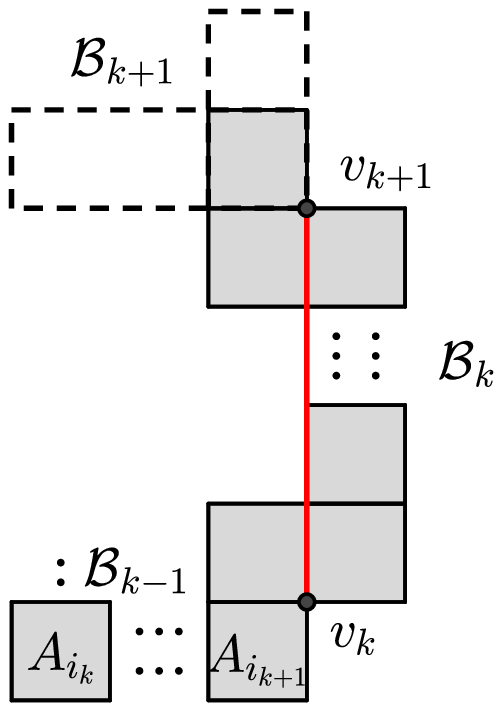}}
			\caption{}
			\label{Figura:dimostrazione cond nec e suff no zig-zag v_k stanno sullo stesso edge interval}
		\end{figure}
	If $k=1$ or $k=s$, then we can consider the blocks $\cB_{s}$, $\cB_{s+1}$ and $\cB_{1}$, so with reference to the Figures \ref{Figura:come si chiudono B_s e B_1 dimostrazione cond nec e suff no zig-zag 1} and \ref{Figura:come si chiudono B_s e B_1 dimostrazione cond nec e suff no zig-zag 2} the desired claim follows. 		
	\item Let $k,j\in\{1,\dots,s+1\}$ with $k< j$. If $k=1$ and $j=s+1$ we have the situation in Figure \ref{Figura:come si chiudono B_s e B_1 dimostrazione cond nec e suff no zig-zag 1} or \ref{Figura:come si chiudono B_s e B_1 dimostrazione cond nec e suff no zig-zag 2}, so the interval having $z_1$ and $z_{s+1}$ as corners is not an inner interval of $\cP$. If $j=k+1$, then we consider the blocks $\cB_k$ and $\cB_{k+1}$. We may assume that $\cB_{k}$ is in horizontal position and that there exist $A_{i_k}$ and $A_{i_{k+1}}$ with $i_k<i_{k+1}$ such that $\cB_k=[A_{i_k},A_{i_{k}+1}]$ and $A_{{i_k}-1}$ is at South of $A_{i_k}$, otherwise we can do appropriate reflections or rotations. $\cB_{k+1}$ is either in horizontal or vertical position. With reference to Figure \ref{Figura: dimostrazione cond nec e suff no zig-zag z_k non formano inner interval}, in both cases the interval having $z_k$ and $z_{k+1}$ as corners is not an inner interval of $\cP$. If $j\neq k+1,k-1$, then the desired conclusion follows.
	\begin{figure}[h]
		\centering
		\includegraphics[scale=0.6]{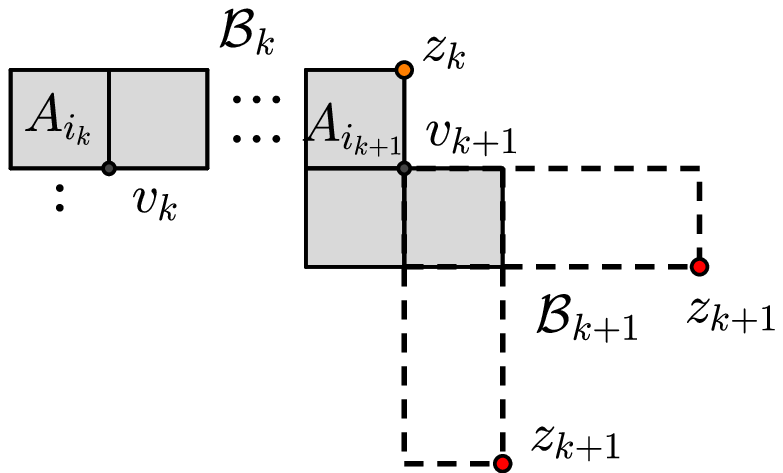}
		\caption{}
		\label{Figura: dimostrazione cond nec e suff no zig-zag z_k non formano inner interval}
	\end{figure}
	\end{enumerate}
	Therefore $\cW:I_1,\dots, I_s$ is a zig-zag walk in $\cP$.
 \end{proof}

\begin{defn}\rm\label{Definizione: Toric ideal}
	 Let $\cP$ be a non-simple polyomino with a unique hole $\cH$. Let $\{V_i\}_{i\in I}$ be the set of the maximal edge intervals of $\cP$ and $\{H_j\}_{j\in J}$ be the set of the maximal horizontal edge intervals of $\cP$. Let $\{v_i\}_{i\in I}$ and $\{h_j\}_{j\in J}$ be the set of the variables associated respectively to $\{V_i\}_{i\in I}$ and $\{H_j\}_{j\in J}$. Let $\cH$ be the hole of $\cP$ and $w$ be another variable. Let $\cI$ be a subset of $V(\cP)$ and we define the following map:
	\begin{align*}
	\alpha: V(\cP)&\longrightarrow K[\{v_i,h_j,w\}:i\in I,j\in J]\\
	a&\longmapsto v_ih_jw^k
	\end{align*}
	where $a\in V_i\cap H_j$, $k=0$ if $a\notin \cI$, and $k=1$ if $a\in \cI$. The toric ring, denoted by $T_{\cP}$, is $K[\alpha(v):v\in V(\cP)]$. We consider the following surjective ring homomorphism $\phi: S \longrightarrow T_{\cP}$ defined by $\phi(x_a)=\alpha(a)$ and the kernel of $\phi$ is the toric ideal denoted by $J_{\cP}$.
\end{defn}

\begin{prop}\label{Proposizione: Se ha ecc... allra I=J}
	Let $\cP$ be a weakly closed path. If one of the following conditions holds:
	\begin{enumerate}
	    \item $\cP$ has an $L$-configuration,
		\item $\cP$ has a weak $L$-configuration,
		\item $\cP$ has a ladder of at least three steps,
		\item $\cP$ has a weak ladder,
	\end{enumerate}
	then $I_{\cP}$ is prime.
\end{prop}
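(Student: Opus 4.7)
The plan is to prove primality of $I_\cP$ by establishing the equality $I_\cP = J_\cP$, where $J_\cP$ is the toric ideal introduced in Definition \ref{Definizione: Toric ideal}. Since $J_\cP$ is the kernel of a ring homomorphism onto the toric ring $T_\cP$, which is an integral domain, this identification will immediately yield that $I_\cP$ is prime.

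The inclusion $I_\cP \subseteq J_\cP$ is a direct verification. It amounts to choosing the set $\cI \subseteq V(\cP)$ so that, for each inner interval $[a,b]$ of $\cP$ with diagonal corners $a,b$ and anti-diagonal corners $c,d$, one has $|\{a,b\} \cap \cI| = |\{c,d\} \cap \cI|$; this forces $\phi(x_ax_b - x_cx_d) = 0$ for every inner $2$-minor. A natural and essentially unique choice is to take $\cI$ to be the set of vertices of $\cP$ lying on one specified side of the unique hole $\cH$, and one checks the balancing condition case by case on the possible positions of $[a,b]$ relative to $\cH$.

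For the reverse inclusion $J_\cP \subseteq I_\cP$, I would adapt the approach developed in \cite{Cisto_Navarra} for closed paths. The idea is to associate to $\cP$ an auxiliary weakly connected and simple collection of cells $\cP^*$, for instance by removing from $\cP$ a cell carefully selected inside one of the blocks provided by the given configuration. By construction $\cP^*$ is weakly connected and simple, so Theorem \ref{Teorema: P collezione semplice dedebolmente connessa allora I=J} applies and yields $I_{\cP^*} = J_{\cP^*}$. The assumed configuration on $\cP$ (L-configuration, weak L-configuration, ladder of at least three steps, or weak ladder) is then used to control the "extra" generators of $J_\cP$ compared to $J_{\cP^*}$, namely those arising from the cycle of $G(\cP)$ that winds around the hole $\cH$. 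The configuration supplies an inner interval whose $2$-minor serves as the bridge needed to rewrite the long hole-cycle binomial as a combination of inner $2$-minors of $\cP$, step by step.

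The main obstacle lies in the explicit case analysis for each of the four configurations: one must construct, for each case, a reduction sequence that rewrites the hole-cycle binomial as a $K$-polynomial combination of inner $2$-minors. The weak cases (weak L-configuration and weak ladder) will be the most delicate, because the hooking corner $v_H$ is a single vertex shared between $A_0$ and $A_1$; inner intervals cannot cross this vertex freely, and the reduction must be organized carefully around this structural obstruction rather than treating $A_0$ and $A_1$ as edge-adjacent cells as in the non-weak cases.
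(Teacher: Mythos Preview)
Your proposal is correct and follows essentially the same route as the paper: one shows $I_\cP = J_\cP$ for the toric ideal of Definition~\ref{Definizione: Toric ideal}, obtaining the inclusion $J_\cP \subseteq I_\cP$ by removing a suitable cell to get a weakly connected simple collection, invoking Theorem~\ref{Teorema: P collezione semplice dedebolmente connessa allora I=J}, and then handling the remaining hole-encircling binomial via the given configuration, exactly as in \cite{Cisto_Navarra}. One point where the paper is more concrete than your sketch: rather than taking $\cI$ to be ``the vertices on one side of the hole'' (which is ill-defined for a thin ring-shaped polyomino), the paper specifies small, configuration-dependent sets---for instance $\cI = V(A)$ for the corner cell $A$ of the $L$-configuration, $\cI = V(B)$ in the weak $L$-case, and explicit finite vertex lists along the relevant block in the ladder and weak-ladder cases---chosen precisely so that the balancing condition holds and so that the reduction argument from \cite[Theorems~4.2 and~5.2]{Cisto_Navarra} goes through.
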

\begin{proof}
(1) Suppose that $\cP$ has an $L$-configuration $\{C_1,C_2,C_3,C_4,C_5\}$ and we can consider suitable reflections or rotations of $\cP$ in order to have the $L$-configuration in the position of Figure \ref{Figura:L-conf in toric inizio sezione}. For convenience set $C_3=A$.
\begin{figure}[h!]
	\centering
	\includegraphics[scale=0.9]{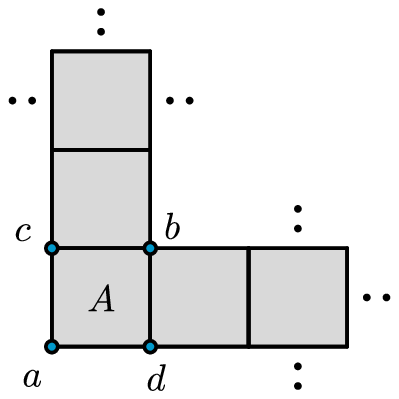} 
	\caption{}
	\label{Figura:L-conf in toric inizio sezione}
\end{figure}
In order to have the desired claim, it is sufficient to prove that $I_{\cP}=J_{\cP}$, where $J_{\cP}$ is the toric ideal defined in Definition \ref{Definizione: Toric ideal} with $\cI=V(A)$. Observe that it follows easily by arguing as done in [\cite{Cisto_Navarra}, Theorem 4.2] and by applying Theorem \ref{Teorema: P collezione semplice dedebolmente connessa allora I=J}.\\
(2) Let $\cL=\{C\}\cup [A,B]\cup [D,F]$ be a weak $L$-configuration of $\cP$. We consider opportune reflections or rotations of $\cP$ in order to have $\cL$ as in the picture on the left in Figure \ref{Figura:esempio weak L configuration}. By similar arguments as in case (1), we conclude that $I_{\cP}=J_{\cP}$, where $J_{\cP}$ is the toric ideal defined in Definition \ref{Definizione: Toric ideal} with $\cI=V(B)$.\\
(3) Suppose that $\cP$ has a ladder of at least three steps and let $\cB=\{\cB_i\}_{i=1,\dots,m}$ be a maximal ladder of $m$ steps, with $m>2$. We can consider suitable reflections or rotations of $\cP$ in order to have the ladder as in Figure \ref{Figura:ladder in toric inizio sezione}. We may assume that the block $\cB_{m-1}$ consists of $n$ cells which we denote by $B_1,\dots, B_n$ from left to right. Let $b_i$ be the lower left corner of $B_i$ for all $i=1,\dots,n$ and let $B$ be the cell of $\cB_m$, having an edge in common with $B_n$. We denote by $a,b$ the diagonal corners of $B$ and by $d$ the other anti-diagonal corner.
\begin{figure}[h!]
	\centering
	\includegraphics[scale=0.8]{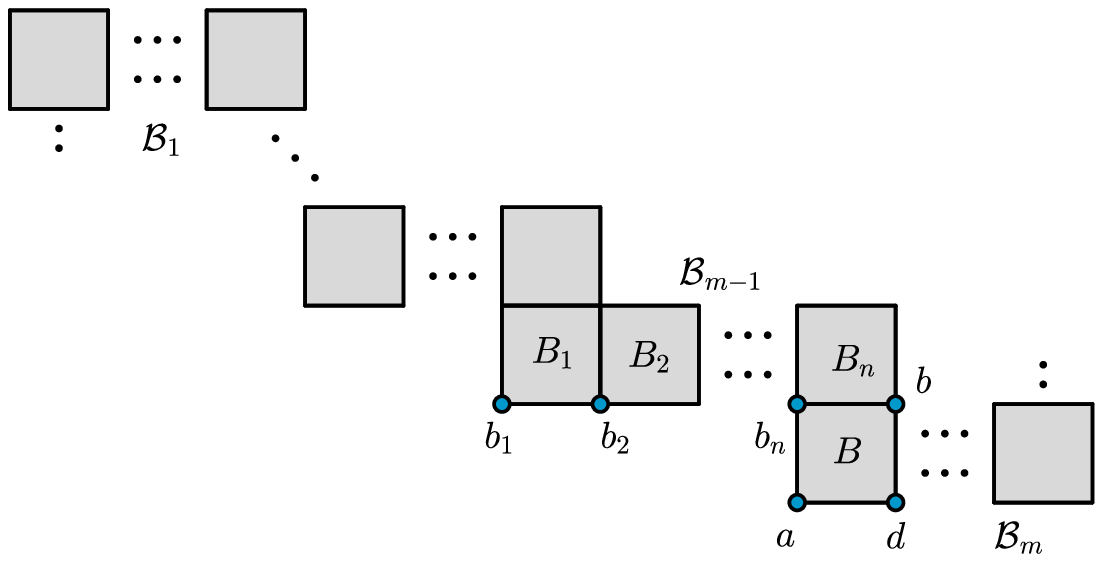} 
	\caption{}
	\label{Figura:ladder in toric inizio sezione}
\end{figure}
	We want to show that $I_{\cP}=J_{\cP}$, where $J_{\cP}$ is the toric ideal defined in \ref{Definizione: Toric ideal} with $\cI=\{b_1,\dots,b_n,b,a,d\}$. We observe that it follows by arguing as done in [\cite{Cisto_Navarra}. Theorem 5.2] and by using Theorem \ref{Teorema: P collezione semplice dedebolmente connessa allora I=J}.\\
	(4) Suppose that $\cP$ has a weak ladder. Let $\cL=\{C,D\}\cup [A,B]$ be the weak ladder of $\cP$, that we can assume being as in Figure \ref{Figura:esempio weak ladder}. We may assume that the block $[A,B]$ is made up of $n$ cells, with $n\geq 2$, which we denote by $C_1,\dots, C_n$ from left to right. Firstly, we assume that the block containing $C$ is in vertical position. We denote by $l_C$ and $l_{C_1}$ respectively the lower left corner of $C$ and $C_1$ (see Figure \ref{Figura:dimostrazione toric weak ladder} (A)).
\begin{figure}[h!]
			\centering
			\subfloat[]{\includegraphics[scale=0.8]{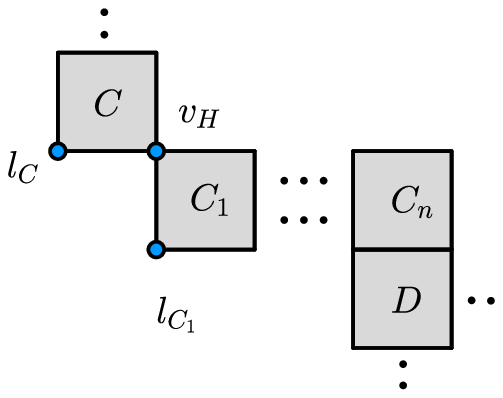}}\qquad \qquad
			\subfloat[]{\includegraphics[scale=0.8]{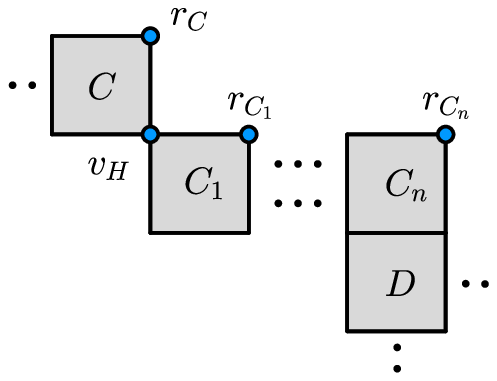}}
			\caption{}
			\label{Figura:dimostrazione toric weak ladder}
		\end{figure}	
By similar arguments as in case (1), we conclude that $I_{\cP}=J_{\cP}$, where $J_{\cP}$ is the toric ideal defined at the beginning of this section with $\cI=\{v_H,l_C,l_{C_1}\}$. 
	Now, we assume that the block containing $C$ is in horizontal position. We denote by $r_C$ the upper right corner of $C$ and by $r_{C_i}$ the upper right corner of $C_i$ for all $i=1,\dots,n$. The conclusion $I_{\cP}=J_{\cP}$, where $J_{\cP}$ is the toric ideal defined in Definition \ref{Definizione: Toric ideal} with $\cI=\{v_H,r_C,r_{C_1},\dots,r_{C_{n}}\}$, follows as in the proof of case (3) (see Figure \ref{Figura:dimostrazione toric weak ladder} (B)).
\end{proof}

\begin{rmk}\rm\label{Remark: dopo la dim coll non sempl e deb connessa primo}
	We observe that the weakly connected and non-simple collection of cells $\cP$ in Figure \ref{Figura:collezione di celle non semplici e non prima} (B) is prime in fact by similar arguments as in case (1) of Proposition \ref{Proposizione: Se ha ecc... allra I=J} it follows that $I_{\cP}=J_{\cP}$ where $J_{\cP}$ is the toric ideal defined in Definition \ref{Definizione: Toric ideal} with $\cI=V(A)$.
\end{rmk}

\begin{thm}
	Let $\cP$ be a weakly closed path. $I_{\cP}$ is prime if and only if $\cP$ does not contain zig-zag walks.
\end{thm}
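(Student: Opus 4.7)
The plan is to derive the theorem as a direct synthesis of two results established earlier in this section together with the general necessity result for the absence of zig-zag walks from \cite{Trento}. I would separate the two implications and handle each by quoting a single earlier statement.

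The forward direction I would handle by a single citation. A weakly closed path is in particular a polyomino, and in \cite{Trento} it is shown that primality of the polyomino ideal forces $\cP$ to contain no zig-zag walks. No new argument is required here beyond observing the applicability.

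The backward direction is where the work of this section pays off. Assuming that $\cP$ has no zig-zag walks, I would apply Theorem \ref{Teorema: P non ha zig-zag se e solo se non ha ladder...} to conclude that $\cP$ must contain at least one of the four structural features: an $L$-configuration, a ladder of at least three steps, a weak $L$-configuration, or a weak ladder. Proposition \ref{Proposizione: Se ha ecc... allra I=J} then supplies the primality of $I_{\cP}$ in each of these four cases, so the desired conclusion follows.

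The principal obstacle in this programme is not the present statement but rather the two supporting results, which have already been proved. Theorem \ref{Teorema: P non ha zig-zag se e solo se non ha ladder...} demanded an intricate constructive case analysis, building a candidate sequence of inner intervals around the path cell by cell and verifying the three zig-zag walk axioms by excluding every forbidden local configuration via Definition \ref{Definizione: weakly closed path}. Proposition \ref{Proposizione: Se ha ecc... allra I=J} required, for each of the four configurations, the identification of a suitable subset $\cI \subseteq V(\cP)$ making the toric ideal of Definition \ref{Definizione: Toric ideal} coincide with $I_{\cP}$; this in turn leans on Theorem \ref{Teorema: P collezione semplice dedebolmente connessa allora I=J}, which identifies the inner 2-minor ideal of a weakly connected and simple collection of cells with the toric ideal of the edge ring of the associated weakly chordal bipartite graph $G(\cP)$. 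Granted all of these, the equivalence to be proved is an immediate two-line combination.
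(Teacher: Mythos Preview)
Your proposal is correct and matches the paper's own proof essentially verbatim: the forward implication is handled by citing \cite[Corollary 3.6]{Trento}, and the backward implication is obtained by combining Theorem \ref{Teorema: P non ha zig-zag se e solo se non ha ladder...} with Proposition \ref{Proposizione: Se ha ecc... allra I=J}. Your additional paragraph summarizing the content of the supporting results is accurate but goes beyond what the paper records, since the authors treat the theorem as an immediate corollary.
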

\begin{proof}
	The necessary condition is proved in \cite[Corollary 3.6]{Trento}. The sufficient one follows from Theorem \ref{Teorema: P non ha zig-zag se e solo se non ha ladder...} and from Proposition \ref{Proposizione: Se ha ecc... allra I=J}.
\end{proof}

\noindent\textbf{Acknowledgements.} The second author dedicates this paper to the memory of his father Sebastiano Navarra (13/11/1950 - 03/04/2021).

	\end{document}